\newtheorem{theorem}{Theorem}[section]
\newtheorem{lemma}[theorem]{Lemma}
\theoremstyle{definition}
\newtheorem{definition}[theorem]{Definition}
\newtheorem{example}[theorem]{Example}
\newtheorem{proposition}[theorem]{Proposition}
\newtheorem{corollary}[theorem]{Corollary}
\newtheorem{remark}[theorem]{Remark}
\theoremstyle{remark}
\newcommand{\bs}{\begin{split}}
\newcommand{\es}{\begin{split}}
\newcommand{\be}{\begin{equation}}
\newcommand{\ee}{\end{equation}}
\numberwithin{equation}{section}
\begin{document}

\title[Eigenvalues, residue formula and integral invariants]
{Eigenvalues of vector fields, Bott's residue formula and integral
invariants}

\author{Ping Li}
\address{Department of Mathematics, Tongji University, Shanghai 200092, China}
\email{pingli@tongji.edu.cn,\qquad pinglimath@gmail.com}
\thanks{The author was partially supported by the National
Natural Science Foundation of China (Grant No. 11471247) and the
Fundamental Research Funds for the Central Universities.}

 \subjclass[2010]{53C23, 53C55, 58J20.}


\keywords{vector field, eigenvalue, Atiyah-Bott-Singer localization
formula, Bott's residue formula, integral invariant}

\begin{abstract}
Given a compatible vector field on a compact connected
almost-complex manifold, we show in this article that the
multiplicities of eigenvalues among the zero point set of this
vector field have intimate relations. We highlight a special case of
our result and reinterpret it as a vanishing-type result in the
framework of the celebrated Atiyah-Bott-Singer localization formula.
This new point of view, via the Chern-Weil theory and a strengthened
version of Bott's residue formula observed by Futaki and Morita, can
lead to an obstruction to Killing real holomorphic vector fields on
compact Hermitian manifolds in terms of a curvature integral.
\end{abstract}

\maketitle

\tableofcontents
\section{Introduction}\label{section1}
In an earlier article \cite{Li3}, the author showed that, if a
compact connected almost-complex manifold admits a compatible circle
action with nonempty fixed points, the weights among the fixed point
set of this action have intimate relations \cite[Theorem 1.1]{Li3}.
The main idea of the proof in \cite{Li3} is refined from a beautiful
observation of Lusztig in \cite{Lu}, which is the invariance of the
equivariant Hirzebruch $\chi_y$-genus under compact connected Lie
group actions. Recently, the author also notices that Lusztig's this
consideration is closely related to Futaki and Morita's work on the
reinterpretation of the Futaki integral invariant on Fano manifolds
(\cite{FM1}, \cite{FM2}, \cite[Chapter 5]{Fu1}). In fact some
considerations of Futaki and Morita could be improved by using the
observation in \cite{Lu} and some other results, which has been
clarified by the author in \cite{Li2}.

The main purpose of the present article is twofold. On the one hand,
we shall show that the idea developed in \cite{Li3} can be carried
over to the case of vector fields on compact almost-complex
manifolds to yield a similar result (Theorem \ref{mainresult1}). On
the other hand, we highlight a special case of this result
(Corollary \ref{corollary1}) by rephrasing it as a vanishing-type
result in the framework of the Atiyah-Bott-Singer localization
formula. This new point of view, through the Chern-Weil theory and a
strengthened version of Bott's residue formula, can lead to a
vanishing result of curvature integral when the underlying manifolds
are complex and the vector fields are Killing (Theorem
\ref{mainresult2}).

Here it is worth pointing out that the idea of this application has
a new feature, which is converse to the usual philosophy of
localization methods involving vector fields. The latter is to
localize the investigation of a global property of the manifolds to
the consideration of local information around the zero points of
vector fields. While our method is to show a property related to
some local information around the zero points of vector fields and
piece it up into a global one.

The rest of this article is arranged as follows. In Section
\ref{section1.5} we will state our main results in this article,
Theorems \ref{mainresult1} and \ref{mainresult2} and Corollaries
\ref{corollary1} and \ref{corollary2}. We recall the
Atiyah-Bott-Singer localization formula in Section \ref{section2}
and reinterpret our Corollary \ref{corollary1} as a vanishing-type
result in this framework. In Section \ref{section3} we firstly
introduce a strengthened version of Bott's residue formula, which
includes the invariant polynomials whose degrees are larger than the
dimension of the underlying manifold and has been observed by Futaki
and Morita to reinterpret the famous Futaki invariant. Then
combining this version of Bott's residue formula with our
reinterpretation of Corollary \ref{corollary1} leads to the proof of
Theorem \ref{mainresult2}. Although the main idea of the proof of
Theorem \ref{mainresult1} is similar to that as in \cite{Li3}, for
completeness and the reader's convenience, we still in Section
\ref{section4} present its proof.

\section{Statement of main results}\label{section1.5}
Suppose $(M^{n},J)$ is a compact connected almost-complex manifold
with complex dimension $n$ and a fixed almost-complex structure $J$.
A smooth vector field $A$ on $(M^{n},J)$ is called \emph{compatible}
if it preserves the almost-complex structure $J$ and the one
parameter group of $A$, $\textrm{exp}(tA)$, lies in a \emph{compact}
group. The latter condition is equivalent to the condition that the
vector field $A$ preserves an almost-Hermitian metric on
$(M^{n},J)$, i.e., $A$ is \emph{Killing} with respect to this
almost-Hermitian metric.

Given this $(M^{n},J)$ and a compatible vector field $A$ whose zero
point set is \emph{nonempty}, we choose an almost-Hermitian metric
$g$ such that $A$ is Killing with respect to this $g$.  Let
$\textrm{zero}(A)$ denote the zero point set of $A$. As is
well-known in the compact transformation group theory (\cite{Ko}),
$\textrm{zero}(A)$ consists of finitely many connected components
and each one is a compact almost-Hermitian submanifold in $M$.
Moreover, the normal bundle of each connected component in
$\textrm{zero}(A)$ can be splitted into a sum of complex line
bundles with respect to the skew-Hermitian transformation induced by
$A$. Let $Z$ be any such a connected component with complex
dimension $r$. Here $r$ of course depends on the choice of $Z$ in
$\textrm{zero}(A)$. Then the normal bundle of $Z$ in $M$, denoted
$\nu(Z)$, can be decomposed into a sum of $n-r$ complex line bundles
$$\nu(Z)=\bigoplus_{i=1}^{n-r}L(Z,\lambda_i),\qquad \lambda_i\in\mathbb{R}-\{0\},$$

such that the eigenvalue of the skew-Hermitian transformation
induced by $A$ on the line bundle $L(Z,\lambda_i)$ is
$\sqrt{-1}\lambda_i$. Or equivalently, the eigenvalue of the action
induced by the one-parameter group $\textrm{exp}(tA)$ on
$L(Z,\lambda_i)$ is $\textrm{exp}(\sqrt{-1}\lambda_it)$. Note that
these nonzero real numbers $\lambda_1,\ldots,\lambda_{n-r}$ are
counted with multiplicities and thus not necessarily mutually
distinct. Of course they depend on the choice of $Z$ in
$\textrm{zero}(A)$. Note also that these $\lambda_i$ are actually
independent of the almost-Hermitian metric $g$ we choose and
completely determined by the vector field $A$.

From now on we use $``e(\cdot)"$ to denote the Euler characteristic
of a manifold.

\begin{definition}\label{definition}
Let us attach a set $\textrm{S}(A)$ to the vector field $A$ as
follows.
\begin{eqnarray}
\textrm{S}(Z): = \left\{ \begin{array}{ll}
\coprod_{e(Z)~\textrm{copies}}\{\lambda_1,\ldots,\lambda_{n-r}\}, &
\textrm{if
$e(Z)>0$},\\
\emptyset, & \textrm{if $e(Z)=0$},\\
\coprod_{-e(Z)~\textrm{copies}}\{-\lambda_1,\ldots,-\lambda_{n-r}\},
& \textrm{if $e(Z)<0$}.
\end{array} \right.
\nonumber\end{eqnarray}

and
$$\textrm{S}(A):=\coprod_{Z}\textrm{S}(Z),$$

where the sum is over all the connected components in
$\textrm{zero}(A)$. Here the symbol $``\coprod"$ means
\emph{disjoint union}. That means, although we write
$\{\lambda_1,\ldots,\lambda_{n-r}\}$ as a set, repeated elements in
it may \emph{not} be discarded.

\end{definition}

Now we can state our main result in this section, which shows that
the eigenvalues on the connected components of $\textrm{zero}(A)$
whose Euler numbers are nonzero have intimate relations.

\begin{theorem}\label{mainresult1}
{\rm{Let the notation be as above. Then for any
$\lambda\in\textrm{S}(A)$, the multiplicity of $\lambda$ in
$\textrm{S}(A)$ is the same as that of $-\lambda$ in
$\textrm{S}(A)$.}}
\end{theorem}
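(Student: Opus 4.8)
The plan is to encode all the eigenvalue data into a single equivariant-index-type generating function and exploit the fact that this function, being the $S^1$-equivariant $\chi_y$-genus (or the relevant variant) of the whole manifold, depends only on global topological data of $M$ and is in particular invariant under the substitution $t \mapsto -t$ in the circle parameter. Concretely, I would introduce the $S^1$-action on $M$ generated by $\exp(tA)$ (after rescaling $A$ so that its orbits close up to an honest circle action, which is legitimate since $\exp(tA)$ lies in a compact group and the rational independence issues can be handled by a limiting/density argument, exactly as in \cite{Li3}) and apply the Atiyah-Bott-Singer holomorphic Lefschetz fixed point formula, or rather its $\chi_y$-genus refinement used by Lusztig. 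For each connected component $Z \subset \mathrm{zero}(A)$ of complex dimension $r$ with normal weights $\lambda_1,\dots,\lambda_{n-r}$, the contribution of $Z$ to the equivariant $\chi_y$-genus is a rational function of $t$ (and the formal variable $y$) whose structure, after specializing appropriately, records precisely the multiset $\{\lambda_1,\dots,\lambda_{n-r}\}$ weighted by a characteristic number of $Z$ that, in the right specialization, becomes $e(Z)$.

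The key steps, in order, would be: (1) reduce to a genuine circle action by the standard density argument, so that $\mathrm{zero}(A)$ becomes the fixed point set $M^{S^1}$ and each component's normal bundle splits into line bundles with integer weights proportional to the $\lambda_i$; (2) write down the equivariant $\chi_y$-genus $\chi_y(M; t)$ both as a global quantity — manifestly a Laurent polynomial in $t^{1/N}$ for suitable $N$, hence with only finitely many exponents — and as the sum over fixed components of local contributions via Atiyah-Bott; (3) take a suitable limit or residue as $y$ or $t$ tends to a distinguished value so that the local contribution of $Z$ collapses to $e(Z)$ times a sum of monomials $t^{\lambda_i}$ (this is the mechanism by which $\chi_0$, the arithmetic genus, "sees" only the Euler characteristics of the fixed components — cf. the classical computation of $\chi_y$ at $y=-1$); (4) observe that the resulting identity expresses the generating function $\sum_{\lambda \in \mathrm{S}(A)} t^{\lambda}$ as (a limit of) something that is visibly invariant under $t \mapsto t^{-1}$, because the global $\chi_y$-genus satisfies the functional equation coming from Serre duality / the fact that reversing the circle action does not change the underlying manifold; (5) conclude by comparing coefficients that the multiplicity of $\lambda$ equals that of $-\lambda$.

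The main obstacle will be step (3)–(4): making precise the specialization in which the local contribution of each fixed component $Z$ degenerates exactly to $e(Z)\sum_i t^{\lambda_i}$ (rather than to some more complicated characteristic number of $Z$ twisted by the normal weights), and simultaneously ensuring that the global object retains its $t \mapsto t^{-1}$ symmetry in that limit. One must choose the genus carefully — the honest $\chi_y$-genus gives $\sum (-y)^p$-weighted data, and one wants the value of the parameter at which the $Z$-contribution telescopes to the Euler characteristic while the normal-bundle weights survive only through their exponential $t^{\lambda_i}$; I expect this to work by expanding the Atiyah-Bott contribution $\int_Z \mathrm{Td}(Z)\,\frac{\prod(\text{something in }y)}{\prod(1 - t^{-\lambda_i}e^{-x_i})}$ and extracting the appropriate term, as in Lusztig's argument, but the bookkeeping of which term is non-trivial and is the crux. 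Once that is in place, the symmetry $t \leftrightarrow t^{-1}$ of the global genus immediately forces the multiplicity statement, and the case $e(Z)=0$ contributing nothing is automatic from the definition of $\mathrm{S}(Z)$.
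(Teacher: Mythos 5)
Your outline follows the same route as the paper (Lusztig's rigidity of the equivariant $\chi_y$-genus plus the Atiyah--Bott--Segal--Singer fixed point formula), but the step you yourself flag as ``the crux'' --- steps (3)--(4) --- is a genuine gap, and your guesses about how it should go are off in ways that matter. First, the correct specialization is not a limit in $t$ or a residue: one Taylor-expands the fixed-point formula in the variable $y+1$ at $y=-1$ and extracts the \emph{linear} coefficient. The zeroth-order term gives only $e(M)=\sum_Z e(Z)$; it is the first-order term that sees the eigenvalues. Second, the local contribution of $Z$ does \emph{not} collapse to $e(Z)\sum_i t^{\lambda_i}$; what actually comes out is
$$\bigl(\tfrac{r}{2}-n\bigr)e(Z)+e(Z)\sum_{j=1}^{n-r}\frac{1}{1-e^{\sqrt{-1}\lambda_j t}},$$
a sum of rational functions of $h=e^{\sqrt{-1}t}$, not a Laurent polynomial. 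Consequently the endgame cannot be ``compare coefficients of a $t\mapsto t^{-1}$-invariant Laurent series''; instead one uses that the total sum $\sum_{\lambda\in\mathrm{S}(A)}\frac{1}{1-h^{\lambda}}$ is \emph{constant} in $h$, together with the elementary identity $\frac{1}{1-h^{\lambda}}+\frac{1}{1-h^{-\lambda}}=1$ (this is \cite[Lemma 2.4]{Li3}), to force the multiset $\mathrm{S}(A)$ to be symmetric under $\lambda\mapsto-\lambda$. The sign conventions in the definition of $\mathrm{S}(Z)$ for $e(Z)<0$ are exactly what make this bookkeeping close up, via $\frac{1}{1-h^{\lambda}}=1-\frac{1}{1-h^{-\lambda}}$.

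Two further points. What is needed globally is not merely invariance under $t\mapsto -t$ (which you propose to get from Serre duality / reversing the action) but the full \emph{rigidity} statement $\chi_y(t,M)\equiv\chi_y(M)$ for all $t$; the paper proves this by noting that each $\chi^p(t,M)$ is a finite sum $\sum_i a_i(p)e^{\sqrt{-1}t\theta_i(p)}$ which can only have limits as $\sqrt{-1}t\to\pm\infty$ if all $\theta_i(p)=0$, the existence of those limits being read off from the fixed-point side. Also, your step (1) --- rescaling $A$ to a genuine circle action --- is unnecessary and problematic when the $\lambda_i$ are incommensurable (the closure of $\exp(tA)$ is then a torus of rank $>1$); the argument works directly with the one-parameter group inside the compact group, applying the Lefschetz formula to the elements $\exp(tA)$ themselves, so no density or limiting argument is needed.
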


By our Definition \ref{definition}, Theorem \ref{mainresult1} itself
provides no information on those connected components whose Euler
numbers are zero. However, the following direct corollary, which
will be rephrased as a vanishing-type result in terms of the
Atiyah-Bott-Singer localization formula in the next section and play
an important role in our applications, can include them.

\begin{corollary}\label{corollary1}
With the above notation understood, we have
$$\sum_{Z}\big[e(Z)\cdot\sum_{i=1}^{n-r}\lambda_i\big]=0,$$

the sum being over all the connected components in
$\textrm{zero}(A)$.
\end{corollary}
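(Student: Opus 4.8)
The plan is to deduce the Corollary directly from Theorem \ref{mainresult1} by computing, in two different ways, the quantity $\Sigma := \sum_{\mu \in \textrm{S}(A)} \mu$, the sum of all elements of the multiset $\textrm{S}(A)$ counted with multiplicity. Since $\textrm{zero}(A)$ has only finitely many connected components and each has finite codimension in $M$, $\textrm{S}(A)$ is a finite multiset of nonzero real numbers, so $\Sigma$ is a well-defined finite sum.

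First I would evaluate $\Sigma$ using the symmetry provided by Theorem \ref{mainresult1}. Grouping the summands according to their value, one has $\Sigma = \sum_{\lambda > 0} \lambda\big(m(\lambda) - m(-\lambda)\big)$, where $m(\lambda)$ denotes the multiplicity of $\lambda$ in $\textrm{S}(A)$ and the sum runs over the finitely many positive reals occurring in $\textrm{S}(A)$; here we use that $0 \notin \textrm{S}(A)$ because every $\lambda_i$ is nonzero. By Theorem \ref{mainresult1}, $m(\lambda) = m(-\lambda)$ for every $\lambda$, so every term vanishes and $\Sigma = 0$.

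Next I would evaluate $\Sigma$ directly from Definition \ref{definition}. Since $\textrm{S}(A) = \coprod_Z \textrm{S}(Z)$, we have $\Sigma = \sum_Z \big(\sum_{\mu \in \textrm{S}(Z)} \mu\big)$, the sum being over all connected components $Z$ of $\textrm{zero}(A)$. Fix such a $Z$ of complex dimension $r$ with associated eigenvalues $\lambda_1,\ldots,\lambda_{n-r}$, and consider the three cases of the definition: if $e(Z) > 0$ then $\textrm{S}(Z)$ is $e(Z)$ copies of $\{\lambda_1,\ldots,\lambda_{n-r}\}$ and its elements sum to $e(Z)\sum_{i=1}^{n-r}\lambda_i$; if $e(Z) = 0$ then $\textrm{S}(Z) = \emptyset$ and the sum is $0 = e(Z)\sum_{i=1}^{n-r}\lambda_i$; and if $e(Z) < 0$ then $\textrm{S}(Z)$ is $-e(Z)$ copies of $\{-\lambda_1,\ldots,-\lambda_{n-r}\}$, whose elements sum to $(-e(Z))\big(-\sum_{i=1}^{n-r}\lambda_i\big) = e(Z)\sum_{i=1}^{n-r}\lambda_i$. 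In every case the contribution of $Z$ to $\Sigma$ is $e(Z)\sum_{i=1}^{n-r}\lambda_i$, hence $\Sigma = \sum_Z\big[e(Z)\cdot\sum_{i=1}^{n-r}\lambda_i\big]$.

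Comparing the two evaluations yields $\sum_Z\big[e(Z)\cdot\sum_{i=1}^{n-r}\lambda_i\big] = 0$, which is the assertion. There is essentially no serious obstacle here, since the statement is a bookkeeping consequence of Theorem \ref{mainresult1}; the only points requiring care are the sign reversal in the $e(Z) < 0$ case, which is exactly compensated by replacing each $\lambda_i$ with $-\lambda_i$ in the definition of $\textrm{S}(Z)$, and the observation that no element of the real multiset $\textrm{S}(A)$ equals its own negative, so that the pairing $\lambda \leftrightarrow -\lambda$ used in the first evaluation is genuine.
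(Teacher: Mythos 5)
Your proposal is correct and is exactly the argument the paper intends: Corollary \ref{corollary1} is presented as a direct consequence of Theorem \ref{mainresult1}, obtained by summing the elements of the multiset $\textrm{S}(A)$ in two ways, with the sign convention in Definition \ref{definition} making each component $Z$ contribute $e(Z)\sum_{i=1}^{n-r}\lambda_i$ in all three cases. Your bookkeeping (including the remark that $0\notin\textrm{S}(A)$ and the sign cancellation when $e(Z)<0$) is accurate, so there is nothing to add.
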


\begin{remark}
~
\begin{enumerate}
\item
When $\text{zero}(A)$ consists of isolated zero points, various
special cases of Theorem \ref{mainresult1} and Corollary
\ref{corollary1} have been obtained in previous literature by using
different methods (\cite[Theorem 2]{PT}, \cite[Theorem 3.5]{Li},
\cite[Corollary 6.3]{FM1}, \cite[Corollary 5.3.12]{Fu1}).

\item
When $\text{zero}(A)$ is arbitrary and the compatible vector field
$A$ generates a circle action, it has been obtained by the author in
\cite{Li3}.
\end{enumerate}
\end{remark}

The following typical example illustrates Theorem \ref{mainresult1}
very well.

\begin{example}
Let $\mathbb{C}P^n$ be the $n$-dimensional complex projective space
with homogeneous coordinate $[z_0,z_1,\ldots,z_n]$ and
$\lambda_1,\ldots,\lambda_s$ ($s\leq n+1$) be $s$ mutually distinct
real numbers. We arbitrarily choose $s$ nonnegative integers
$n_1,\ldots,n_s$ such that
$$\sum_{i=1}^s(n_i+1)=n+1.$$

 Using these data we can
define a one-parameter group action $\psi_t$ on $\mathbb{C}P^n$ by
$$\psi_t~:~\mathbb{C}P^n\longrightarrow\mathbb{C}P^n,$$
$$[z_0,z_1,\ldots,z_n]\longmapsto$$
$$[e^{\sqrt{-1}\lambda_1t}z_0,\ldots,
e^{\sqrt{-1}\lambda_1t}z_{n_1},
e^{\sqrt{-1}\lambda_2t}z_{n_1+1},\ldots,
e^{\sqrt{-1}\lambda_2t}z_{n_1+n_2+1}, \ldots,
e^{\sqrt{-1}\lambda_st}z_{n-n_s},\ldots,
e^{\sqrt{-1}\lambda_st}z_n],$$

 i.e., each $\lambda_i$ appears
exactly $n_i+1$ times consecutively. Let $A$ be the vector field
generating this $\psi_t$. Thus
$$\textrm{zero}(A)=\textrm{fixed point set of the action $\{\psi_t\}$}
=\coprod_{i=1}^sM_i,$$

where
$$M_i=\{[0,\ldots,0,z_{k_i},\ldots,z_{k_i+n_i},0,\ldots,0]\in
\mathbb{C}P^{n}\}\cong\mathbb{C}P^{n_i},$$ and
$$k_i=\sum_{j=1}^{i-1} (n_{j}+1)~(i\geq 2),\qquad k_1:=0,\qquad
\mathbb{C}P^{0}:=\{\emph{\textrm{pt}}\}.$$

The eigenvalues of the vector field $A$ on the connected component
$M_i$ are
$$\{\sqrt{-1}(\lambda_j-\lambda_i)~\textrm{with multiplicity}~ n_j+1~\big|~j\neq
i\}.$$

Thus in $\textrm{S}(A),$ the multiplicity of each
$\lambda_j-\lambda_i$ ($1\leq i,j\leq s, i\neq j$) is
$(n_i+1)(n_j+1)$ as $e(M_i)=n_i+1$.
\end{example}

To distinguish the symbols $(M^n,J)$ and $A$ for an almost-complex
manifold and its compatible vector field, we use once and for all
$(N,J)=(N^{n},J)$ and $X$ to denote a compact complex manifold of
complex dimension $n$ and a \emph{real holomorphic} vector field on
$N$ respectively. Here by ``real holomorphic'' we mean that the
corresponding vector field $X-\sqrt{-1}JX$ on $T^{1,0}N$, the
$(1,0)$-part of the complexified tangent bundle of $N$, is
holomoprhic.

Before stating our next result, we need to introduce some notation
and symbols.

We choose a Hermitian metric $g$ on $(N,J)$ and let $\nabla$ be the
Hermitian connection on the holomorphic tangent bundle $T^{1,0}N$,
which is also called in some literature the \emph{Chern connection}.
Let $\Gamma(\textrm{End}(T^{1,0}N))$ be the vector space of smooth
sections of the endomorphism bundle of $T^{1,0}N$. Following Bott
(\cite[Lemma 1]{Bo1}), we define an element
$L(X)\in\Gamma(\textrm{End}(T^{1,0}N))$ as follows. Put
$$L(X)(\cdot):=[X,\cdot]-\nabla_X(\cdot):~\Gamma(T^{1,0}N)\rightarrow \Gamma(T^{1,0}N),$$

where $[\cdot,\cdot]$ is the Lie bracket and $\Gamma(T^{1,0}N)$ is
the vector space of smooth sections of $T^{1,0}N$.  The stability of
$\Gamma(T^{1,0}N)$ under the map $L(X)$ has been explained in
\cite[Lemma 1]{Bo1}. Moreover, for any smooth function $f$ and
$Y\in\Gamma(T^{1,0}N)$, it is direct to verify that $L(X)(fY)=
fL(X)(Y)$ by using the derivation property of $\nabla$. Thus $L(X)$
can be viewed as an $\textrm{End}(T^{1,0}N)$-valued function, i.e.,
$L(X)\in\Gamma(\textrm{End}(T^{1,0}N)).$ Let $R$ be the curvature
form of $\nabla$, which is an $\textrm{End}(T^{1,0}N)$-valued
$(1,1)$-form on $N$, i.e., $R\in
\Gamma(\textrm{End}(T^{1,0}N)\otimes
T^{1,0}N\otimes\overline{T^{1,0}N})$. Therefore it makes sense to
discuss the trace $\big(\text{tr}(\cdot)\big)$ and determinant
$\big(\text{det}(\cdot)\big)$ of $L(X)$ and $R$ (see Theorem
\ref{mainresult2} below).

After reinterpreting Corollary \ref{corollary1} as a vanishing-type
result in terms of the Atiyah-Bott-Singer residue formula in Section
\ref{section2} and building a bridge via the Chern-Weil theory in
Section \ref{section3}, we can yield the following result, which
provides an obstruction-type result to the holomorphic Killing
vector fields on compact complex manifolds.

\begin{theorem}\label{mainresult2}
{\rm{Suppose $(N^{n},~J, ~g)$ is an $n$-dimensional compact
Hermitian manifold and $X$ is a Killing and real holomorphic vector
field. Then we have
$$\int_M\textrm{tr}\big(L(X)\big)\cdot c_n(\nabla)+
\int_Mc_1(\nabla)\cdot\textrm{det}
\big(L(X)+\frac{\sqrt{-1}}{2\pi}R\big)=0.$$

 Here $c_i(\nabla)$
is the $i$-th Chern form with respect to the Chern connection
$\nabla$.}}
\end{theorem}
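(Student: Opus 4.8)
The plan is to deduce Theorem \ref{mainresult2} from Corollary \ref{corollary1} by translating both sides of the latter into integrals of curvature forms via the Chern-Weil theory, using the strengthened Bott residue formula attributed to Futaki and Morita. First I would recall that, for a real holomorphic vector field $X$ on $(N,J)$, Bott's residue formula expresses, for any $\textrm{GL}(n,\mathbb{C})$-invariant polynomial $\varphi$, the localization of $\varphi$ applied to the curvature $R$ in terms of data of $L(X)$ restricted to $\text{zero}(X)$; the strengthened version allows $\deg\varphi$ to exceed $n$, in which case the global integral side vanishes and one is left with a purely local identity among the residues. The key observation is that if $X$ is additionally Killing, then $\text{zero}(X)$ is exactly the $\textrm{zero}(A)$ of the present setup, the eigenvalues of $L(X)$ along a component $Z$ are (up to the normalizing constant $\sqrt{-1}/2\pi$, or simply $\sqrt{-1}$ depending on convention) the $\sqrt{-1}\lambda_i$ appearing in the normal bundle splitting $\nu(Z)=\bigoplus_i L(Z,\lambda_i)$, and the restriction of $L(X)$ to $TZ$ is nilpotent (in fact zero on the trace level along $Z$), so that $\text{tr}\big(L(X)|_Z\big)=\sum_{i=1}^{n-r}\sqrt{-1}\lambda_i$ after accounting for signs.

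The heart of the argument is the choice of the right invariant polynomial. I would take $\varphi$ to be (a constant multiple of) the degree-$(n+1)$ part of $\text{tr}(\cdot)\cdot c_n(\cdot)$ together with $c_1(\cdot)\cdot\det(\cdot)$ — more precisely, consider the formal combination that, after applying the Chern-Weil homomorphism with the pair $(L(X), R)$, produces the integrand $\text{tr}(L(X))\,c_n(\nabla)+c_1(\nabla)\,\det\!\big(L(X)+\tfrac{\sqrt{-1}}{2\pi}R\big)$. Applying the Futaki-Morita strengthened Bott formula to this polynomial, the global integral is $\int_N\varphi(R)$ with $\deg\varphi=n+1>n$, hence is the integral of a form of degree $2n+2$ on a $2n$-dimensional manifold, which is identically zero. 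The localization side then becomes a sum over the components $Z$ of local residue integrals over $Z$; on each $Z$ the contribution factors through $\text{tr}(L(X)|_Z)$ times the localized $c_n$, which by the usual normal-bundle bookkeeping evaluates to a universal constant times $\big(\sum_i\lambda_i\big)\cdot e(Z)$. Summing over $Z$ reproduces exactly $\sum_Z\big[e(Z)\sum_i\lambda_i\big]$, which vanishes by Corollary \ref{corollary1}.

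The main obstacle I anticipate is the bookkeeping on the localization side: one must verify that when the degree of $\varphi$ exceeds $n$, the residue of each component $Z$ collapses precisely to the product of $e(Z)$ (coming from $\int_Z c_r(TZ)=\int_Z e(Z)$, i.e.\ the top Chern/Euler class of $Z$) with the elementary quantity $\sum_{i=1}^{n-r}\lambda_i$ (the trace of the normal action), with all other curvature terms of $Z$ and of the normal bundle dropping out by degree reasons once the Euler class of the normal bundle is cancelled against the denominator in Bott's formula. This requires care with the identification of $L(X)|_Z$, which splits as $L(X)|_{TZ}\oplus L(X)|_{\nu(Z)}$, the first summand having vanishing trace (the Killing condition forces the tangential part to contribute nothing to $\text{tr}$) and the second having trace $\sum_i\sqrt{-1}\lambda_i$; one then matches normalization constants ($2\pi$ factors and powers of $\sqrt{-1}$) between the two displayed terms so that the spurious $\det(\tfrac{\sqrt{-1}}{2\pi}R)$ contributions, which would be higher-degree forms, genuinely vanish rather than conspire into an extra term. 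Once these normalizations are pinned down, the theorem follows immediately from Corollary \ref{corollary1}.
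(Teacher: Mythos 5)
Your overall strategy---apply the Futaki--Morita strengthened Bott residue formula to the degree-$(n+1)$ invariant polynomial $c_1c_n$, identify the theorem's integrand with the $(n,n)$-component of $c_1c_n\big(L(X)+\frac{\sqrt{-1}}{2\pi}R\big)$, and kill the localization side using Corollary \ref{corollary1}---is exactly the paper's route (Propositions \ref{prop} and \ref{prop2}). The localization bookkeeping you describe in your last paragraph is also essentially right: non-degeneracy (automatic for a Killing real holomorphic field) forces $L(X)\big|_{T^{1,0}Z}=0$, the denominator $\det\big(L^{\mu}(X)+\frac{\sqrt{-1}}{2\pi}R^{\mu}\big)$ cancels against the normal contribution, and each component contributes $e(Z)\cdot\sum_i\sqrt{-1}\lambda_i$.

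However, there is a genuine error in your statement of the strengthened residue formula, and as written it breaks the logical chain. You assert that when $\deg\varphi>n$ ``the global integral side vanishes and one is left with a purely local identity among the residues,'' and later that ``the global integral is $\int_N\varphi(R)$ \dots hence \dots identically zero.'' This is false: the global side of the strengthened formula (Theorem \ref{localization2}) is $\int_N\varphi\big(L(X)+\frac{\sqrt{-1}}{2\pi}R\big)$, a mixed-degree Chern--Weil form whose $(n,n)$-component is generally nonzero when $\deg\varphi=n+1$; it reduces to $\int_N\varphi\big(\frac{\sqrt{-1}}{2\pi}R\big)$ only when $\deg\varphi\le n$. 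The standard counterexample is $\varphi=c_1^{n+1}$, for which $f_{\varphi}(X)$ is, up to a constant, the Futaki invariant---certainly not identically zero. With your version of the formula the theorem's left-hand side never enters the identity at all, so showing that the residues sum to zero proves nothing about it; and if the global side really did vanish for degree reasons, Corollary \ref{corollary1} would be superfluous and the theorem trivial. The repair is precisely the correct form of the strengthened formula: the theorem's left-hand side \emph{is} the global side $\int_N c_1c_n\big(L(X)+\frac{\sqrt{-1}}{2\pi}R\big)$ (after expanding $c_1c_n=\mathrm{tr}\cdot\det$ and keeping the $(n,n)$-part), this equals the sum of local residues, and that sum equals $\sum_Z e(Z)\sum_i\sqrt{-1}\lambda_i=0$ by Corollary \ref{corollary1}.
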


\begin{remark}
Note that $\textrm{det} \big(L(X)+\frac{\sqrt{-1}}{2\pi}R\big)$ is a
differential form of possibly mixed degrees. So in this theorem we
are only concerned with its homogeneous component of
$(n-1,n-1)$-form due to the dimensional reason.
\end{remark}

Theorem \ref{mainresult2} has the following corollary when the
Hermitian metric $g$ is K\"{a}hler.

\begin{corollary}\label{corollary2}
If $(N^{n}, ~J, ~g)$ is an $n$-dimensional compact K\"{a}hler
manifold and $X$ is a Killing vector field, we have
$$\int_M\textrm{Ric}(g)\cdot\textrm{det}
\big(\nabla X+\frac{\sqrt{-1}}{2\pi}R\big)=0,$$

where $\textrm{Ric}(g)$ is the Ricci form of the K\"{a}hler metric
$g$.
\end{corollary}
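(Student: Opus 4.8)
The plan is to derive Corollary~\ref{corollary2} from Theorem~\ref{mainresult2} by specializing the two ingredients $L(X)$ and $c_1(\nabla)$ to the K\"ahler case and then checking that the first integral in Theorem~\ref{mainresult2} contributes nothing. First I would invoke the classical fact that on a compact K\"ahler manifold every Killing vector field is automatically real holomorphic, i.e.\ $\mathcal{L}_X J=0$, so that $X$ meets the hypotheses of Theorem~\ref{mainresult2}; note also that, being Killing, $X$ generates a subgroup of the compact group $\textrm{Isom}(N,g)$, hence is compatible in the sense of Section~\ref{section1.5}, so that Corollary~\ref{corollary1} is available for $X$.

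Next I would simplify Bott's endomorphism. For a K\"ahler metric the Chern connection $\nabla$ is the restriction of the Levi--Civita connection to $T^{1,0}N$ and is torsion-free, so from $L(X)(Y)=[X,Y]-\nabla_X Y$ together with $[X,Y]=\nabla_X Y-\nabla_Y X$ and the observation that $\nabla_Y X^{0,1}=0$ for $Y\in\Gamma(T^{1,0}N)$ (obtained by comparing types in the torsion-free identity, using $\mathcal{L}_X J=0$) one gets $L(X)=-\nabla X$ as an endomorphism of $T^{1,0}N$, where $\nabla X\colon Y\mapsto\nabla_Y X$. Since $X$ is Killing and holomorphic, $\nabla X$ is both complex-linear and skew-symmetric, hence skew-Hermitian on $T^{1,0}N$; in particular $\textrm{tr}(L(X))=-\textrm{tr}(\nabla X)$ is a purely imaginary function. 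Plugging $L(X)=-\nabla X$ into Theorem~\ref{mainresult2}: the $(n-1,n-1)$-component of $\textrm{det}\big(L(X)+\frac{\sqrt{-1}}{2\pi}R\big)$ is homogeneous of degree $1$ in $\nabla X$, so replacing $-\nabla X$ by $\nabla X$ flips its sign, i.e.\ in bidegree $(n-1,n-1)$ one has $\textrm{det}\big(L(X)+\frac{\sqrt{-1}}{2\pi}R\big)=-\textrm{det}\big(\nabla X+\frac{\sqrt{-1}}{2\pi}R\big)$; since moreover $c_1(\nabla)=\frac{1}{2\pi}\textrm{Ric}(g)$, the second integral of Theorem~\ref{mainresult2} equals $-\frac{1}{2\pi}\int_M\textrm{Ric}(g)\cdot\textrm{det}\big(\nabla X+\frac{\sqrt{-1}}{2\pi}R\big)$, i.e.\ up to a harmless constant the integral of Corollary~\ref{corollary2}.

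It then remains to prove $\int_M\textrm{tr}(L(X))\cdot c_n(\nabla)=0$, which is the real content. The key observation is that $\textrm{tr}(L(X))$ is locally constant on $\textrm{zero}(X)$: on a component $Z$ the field $X$ vanishes identically, so $\nabla X$ annihilates $T^{1,0}Z$, while on the normal bundle $\nu(Z)=\bigoplus_{i=1}^{n-r}L(Z,\lambda_i)$ it acts with eigenvalues $\sqrt{-1}\lambda_i$; hence $\textrm{tr}(L(X))|_Z=\sqrt{-1}\sum_{i=1}^{n-r}\lambda_i$. Since $c_n(\nabla)$ is a closed top-degree form integrating to $e(N)$ that localizes to $\textrm{zero}(X)$ with local contribution $e(Z)$ on each component $Z$ (generalized Poincar\'e--Hopf), applying to the form $\textrm{tr}(L(X))\cdot c_n(\nabla)$ the same strengthened version of Bott's residue formula that proves Theorem~\ref{mainresult2} --- pulling the constant $\textrm{tr}(L(X))|_Z$ out of the residue on each $Z$ --- would give
\[
\int_M\textrm{tr}(L(X))\cdot c_n(\nabla)=\sqrt{-1}\sum_Z e(Z)\sum_{i=1}^{n-r}\lambda_i,
\]
which vanishes by Corollary~\ref{corollary1}. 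Feeding this back into the identity of the previous paragraph gives $\int_M\textrm{Ric}(g)\cdot\textrm{det}\big(\nabla X+\frac{\sqrt{-1}}{2\pi}R\big)=0$, as asserted.

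The step I expect to be the main obstacle is the last localization. The form $\textrm{tr}(L(X))\,c_n(\nabla)$ is not literally $\phi\big(L(X)+\frac{\sqrt{-1}}{2\pi}R\big)$ for an invariant polynomial $\phi$, so one must either exhibit an equivariantly closed extension of it --- the natural candidate being $\textrm{tr}(L(X))\cdot\textrm{det}\big(L(X)+\frac{\sqrt{-1}}{2\pi}R\big)$, whose closedness should follow from Bott's Lemma relating $d^{\nabla}L(X)$ to $\iota_X R$ (cf.\ \cite[Lemma~1]{Bo1}) --- and then apply Bott's residue formula to it term by term, or else run a direct cutoff argument, replacing $c_n(\nabla)$ by a representative concentrated near $\textrm{zero}(X)$ and estimating the error by the oscillation of $\textrm{tr}(L(X))$ there. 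Everything else is routine bookkeeping with signs, bidegrees, and the normalization $c_1(\nabla)=\frac{1}{2\pi}\textrm{Ric}(g)$.
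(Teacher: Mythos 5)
Your overall strategy --- specialize Theorem \ref{mainresult2}, identify $L(X)=-\nabla X$ via the coincidence of the Chern and Levi--Civita connections, observe that the $(n-1,n-1)$-component of the determinant is homogeneous of degree one in $L(X)$ so the sign flips, and identify $c_1(\nabla)$ with the Ricci form --- is exactly the paper's. The place where your argument genuinely diverges, and where it has a real gap, is the first integral $\int_M\textrm{tr}\big(L(X)\big)\cdot c_n(\nabla)$. You propose to kill it by localizing it to $\textrm{zero}(X)$, pulling the locally constant value $\textrm{tr}(L(X))\big|_Z=\sqrt{-1}\sum_i\lambda_i$ out of each local contribution $e(Z)$, and then invoking Corollary \ref{corollary1}. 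But as you yourself note, $\textrm{tr}\big(L(X)\big)\cdot c_n(\nabla)$ is \emph{not} of the form $\varphi\big(L(X)+\frac{\sqrt{-1}}{2\pi}R\big)$ for any invariant polynomial $\varphi$ (the invariant polynomial $c_1c_n$ produces the \emph{sum} of the two integrals in Theorem \ref{mainresult2}, not either one separately), so Theorem \ref{localization2} does not apply to it as stated. Neither of the two repair strategies you sketch (an equivariantly closed extension, or a cutoff/oscillation estimate) is carried out, and each would require nontrivial work; as written, the "main obstacle" you flag is precisely the unproven step on which the whole argument rests.

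The gap is also unnecessary, and this is where the paper's proof is decisively simpler: the same theorem of Kobayashi you half-use (a vector field on a compact K\"ahler manifold is Killing if and only if it is real holomorphic \emph{and} $\textrm{div}(X)=0$) gives you the other half for free. Since $\textrm{div}(X)=\textrm{tr}(\nabla X)$, a Killing field satisfies $\textrm{tr}\big(L(X)\big)=-\textrm{tr}(\nabla X)\equiv 0$ \emph{pointwise}, so the first integral vanishes identically with no localization needed. (Your weaker observation that $\textrm{tr}(\nabla X)$ is purely imaginary, coming from skew-Hermitianness alone, is not enough.) With that one line the rest of your computation goes through and reproduces the paper's proof; the residual discrepancy in the normalization $c_1(\nabla)=\textrm{Ric}(g)$ versus $\frac{1}{2\pi}\textrm{Ric}(g)$ is a harmless convention since the conclusion is the vanishing of the integral.
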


\begin{proof}
If the Hermitian metric $g$ is K\"{a}hler, then the Chern connection
$\nabla$ coincides with the Levi-Civita connection of $g$, which
implies
$$L(X)(\cdot)=[X,\cdot]- \nabla_X(\cdot)=-\nabla_{(\cdot)}X
=-\nabla X.$$

On the other hand, a well-known fact (\cite[p. 107, Theorem
4.3]{Ko}) tells us that a smooth vector field $X$ on a compact
K\"{a}hler manifold is Killing if and only if it is real holomorphic
and its divergence $\text{div}(X)=0$. By definition we have
$\text{div}(X)=\text{tr}(\nabla X)$ and thus
$\textrm{tr}\big(L(X)\big)=-\textrm{tr}(\nabla X)=0$ if $X$ is
Killing. Also note that in this case the first Chern form
$c_1(\nabla)=\text{Ric}(g)$. Therefore,
 \be\bs
0&=\int_Mc_1(\nabla)\cdot\textrm{det}
\big(L(X)+\frac{\sqrt{-1}}{2\pi}R\big)\\
&=\int_M\text{Ric}(g)\cdot\textrm{det}
\big(-\nabla(X)+\frac{\sqrt{-1}}{2\pi}R\big)\\
&=-\int_M\text{Ric}(g)\cdot\textrm{det}
\big(\nabla(X)+\frac{\sqrt{-1}}{2\pi}R\big).\end{split}\nonumber\ee
\end{proof}

\section{Atiyah-Bott-Singer localization formula and  reinterpretation of Corollary \ref{corollary1}}\label{section2}
As before let $(M^n,J)$ be a compact connected almost-complex
manifold and $A$ its compatible vector field with nonempty zero
point set $\text{zero}(A)$. We keep using the related notation and
symbols introduced in Section \ref{section1.5}.

 In this section, we briefly
recall the Atiyah-Bott-Singer residue formula, which reduces the
calculation of the Chern numbers of $(M^{2n},J)$ to the
consideration of the local information around $\textrm{zero}(A)$.

 Let $x_1,\ldots,x_n$ denote
the formal Chern roots of $M$, i.e., the total Chern class of
$(M,J)$, $c(M,J)$, has the following formal decomposition:
$$c(M,J):=1+\sum_{i=1}^nt^i\cdot c_i(M,J)
=\prod_{i=1}^n(1+tx_i).$$

Similarly, we denote by $\alpha_1,\ldots,\alpha_r$ the formal Chern
roots of the connected component $Z$ in $\text{zero}(A)$ and
$\beta_1,\ldots,\beta_{n-r}$ the Euler classes (or the first Chern
classes) of the complex line bundles
$L(Z,\lambda_1),\ldots,L(Z,\lambda_{n-r})$.

With the above-defined notation and symbols in mind, we have the
following localization formula, which reduces the calculation of the
Chern numbers of $(M^{2n},J)$ to the consideration of the local
information around $\textrm{zero}(A)$ \cite[p. 598]{AS}.

\begin{theorem}[Residue formula, almost-complex case]\label{localization1}
{\rm{Let $\varphi=\varphi(\cdot,\ldots,\cdot)$ be a symmetric
polynomial with $n$ variables and we define
$$f_{\varphi}(A):=\sum_Z\int_Z\frac{\varphi(\alpha_1,\ldots,\alpha_r,\sqrt{-1}\lambda_1+\beta_1,\ldots,
\sqrt{-1}\lambda_{n-r}+\beta_{n-r})}{\prod_{j=1}^{n-r}(\sqrt{-1}\lambda_j+\beta_j)}.$$

If we use $\text{deg $(\varphi)$}$ to denote the degree of the
symmetric polynomial $\varphi$. Then
\begin{eqnarray}
f_{\varphi}(A) = \left\{ \begin{array}{ll}
0, & \text{deg~$(\varphi)<n$}\\
\int_M\varphi(x_1,\ldots,x_n), & \textrm{deg~$(\varphi)=n$.}
\end{array} \right.
\nonumber\end{eqnarray} }}
\end{theorem}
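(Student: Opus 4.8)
The plan is to deduce the residue formula from the localization theorem in equivariant cohomology (Atiyah--Bott, Berline--Vergne). Since $A$ is compatible, the closure $T$ of the one--parameter subgroup $\{\exp(tA)\}$ is a torus acting on $M$ and preserving $J$, and by continuity a point of $M$ is fixed by all of $T$ precisely when $A$ vanishes there, so $M^{T}=\textrm{zero}(A)=\coprod_{Z}Z$. The invariant almost--complex structure makes $TM$ a $T$--equivariant complex vector bundle, hence it carries equivariant Chern classes $c_i^{T}(M)\in H^{2i}_{T}(M;\mathbb{R})$ and equivariant Chern roots $x_1^{T},\dots,x_n^{T}$, and $\varphi^{T}:=\varphi(x_1^{T},\dots,x_n^{T})$ is a well--defined class in $H^{2\textrm{deg}(\varphi)}_{T}(M;\mathbb{R})$ that maps to $\varphi(x_1,\dots,x_n)$ under the forgetful homomorphism $H^*_{T}(M)\to H^*(M)$. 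An element of the Lie algebra $\mathfrak{t}$, in particular $A$, determines an evaluation $H^*_{T}(\textrm{pt})=\textrm{Sym}(\mathfrak{t}^*)\to\mathbb{C}$, and with the standard normalization the weight of $T$ on $L(Z,\lambda_i)$ evaluates at $A$ to $\sqrt{-1}\lambda_i$.

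I would first treat the left--hand side. Integration over $M$ sends $\varphi^{T}$ to $\int_M\varphi^{T}\in H^{2\textrm{deg}(\varphi)-2n}_{T}(\textrm{pt})$. This group vanishes when $\textrm{deg}(\varphi)<n$, so $\int_M\varphi^{T}=0$ in that case; when $\textrm{deg}(\varphi)=n$ the group is $H^0_{T}(\textrm{pt})=\mathbb{R}$, and naturality of integration under the forgetful homomorphism identifies this constant with $\int_M\varphi(x_1,\dots,x_n)$. Evaluating $\int_M\varphi^{T}$ at $A$ therefore reproduces the right--hand side of the asserted dichotomy.

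Next I would compute the localized side. The localization theorem gives, in the fraction field of $H^*_{T}(\textrm{pt})$,
$$\int_M\varphi^{T}=\sum_{Z}\int_Z\frac{\varphi^{T}|_{Z}}{e^{T}\big(\nu(Z)\big)}.$$
On each $Z$ the torus acts trivially, so the equivariant splitting $TM|_{Z}=TZ\oplus\nu(Z)=TZ\oplus\bigoplus_{i=1}^{n-r}L(Z,\lambda_i)$ has equivariant Chern roots $\alpha_1,\dots,\alpha_r$ for $TZ$ and $\beta_i+w_i$ for $L(Z,\lambda_i)$, where $w_i\in H^2_{T}(\textrm{pt})$ is the weight. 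Evaluating at $A$ sends $w_i$ to $\sqrt{-1}\lambda_i$, so $\varphi^{T}|_{Z}$ becomes $\varphi(\alpha_1,\dots,\alpha_r,\sqrt{-1}\lambda_1+\beta_1,\dots,\sqrt{-1}\lambda_{n-r}+\beta_{n-r})$ and $e^{T}(\nu(Z))$ becomes $\prod_{j=1}^{n-r}(\sqrt{-1}\lambda_j+\beta_j)$. Since every $\lambda_j\neq 0$, the scalar $\sqrt{-1}\lambda_j$ is invertible, so the quotient is a well--defined element of $H^*(Z)\otimes\mathbb{C}$ (expand in a finite geometric series in the nilpotent class $\beta_j/(\sqrt{-1}\lambda_j)$); the same non--vanishing keeps $A$ off the hyperplanes where a normal weight degenerates, so evaluating the localization identity at $A$ is legitimate. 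Summing the resulting ordinary integrals over all components $Z$ yields exactly $f_{\varphi}(A)$, and comparison with the preceding paragraph completes the proof.

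The step I expect to be the main obstacle is the localization theorem itself. The shortest route is to cite it, since it is essentially the cohomological $G$--index formula in \cite{AS}; a self--contained alternative is to prove the Berline--Vergne fixed--point formula in the Cartan model. For that one chooses an invariant metric, lets $\theta$ be the dual one--form of $A$, and notes that on $M\setminus\textrm{zero}(A)$ the form $\theta/(d\theta-\iota_A\theta)$ furnishes an explicit primitive under $d_A=d-\iota_A$ for every equivariantly closed form, so the integral of an equivariant characteristic form concentrates on arbitrarily small tubular neighbourhoods of the $Z$'s, where a Gaussian (Mathai--Quillen) computation of the local model produces precisely the above quotient. A secondary, purely bookkeeping, point is to pin down the $\tfrac{\sqrt{-1}}{2\pi}$ normalization of the Chern--Weil representatives so that the weight of $\exp(tA)$ on $L(Z,\lambda_i)$, namely $\exp(\sqrt{-1}\lambda_i t)$, evaluates to $\sqrt{-1}\lambda_i$ and not to a rescaled value.
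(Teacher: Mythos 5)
Your proposal is correct, but be aware that it takes a genuinely different route from the paper, which in fact offers no proof of its own: Theorem \ref{localization1} is quoted from \S 8 of Atiyah--Singer \cite{AS}, where it is deduced from the Lefschetz fixed point formula for the equivariant index of elliptic complexes (one compares the cohomological formula for the $G$-index, a character of the compact group generated by $\exp(tA)$, with its fixed-point expression, and extracts the residue identities from the resulting character identity). You instead work entirely in equivariant de Rham cohomology: push $\varphi^{T}$ forward to $H^{2\deg\varphi-2n}_{T}(\mathrm{pt})$, note that this group forces vanishing when $\deg\varphi<n$ and reduces to the ordinary Chern number when $\deg\varphi=n$, and then expand the same pushforward by Atiyah--Bott/Berline--Vergne localization before evaluating at $A$. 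This is a legitimate and arguably cleaner derivation: it avoids index theory altogether, it makes visible exactly where the hypotheses enter (compatibility of $A$ produces the compact torus $T$ needed for localization, and $\lambda_j\neq 0$ makes the equivariant Euler classes invertible at the evaluation point), and as a bonus it explains why the statement is silent for $\deg\varphi>n$: there $\int_M\varphi^{T}$ is a nonconstant polynomial on $\mathfrak{t}$, so $f_\varphi(A)$ is a genuinely equivariant quantity rather than an ordinary characteristic number. The two items you flag as the remaining work --- the localization theorem itself and the $\tfrac{\sqrt{-1}}{2\pi}$ normalization matching the convention that $\exp(tA)$ acts on $L(Z,\lambda_i)$ with eigenvalue $e^{\sqrt{-1}\lambda_i t}$ --- are precisely the right ones, and both are standard; citing Berline--Vergne or carrying out the Mathai--Quillen computation you sketch would close them.
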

~

\begin{remark}~\label{remark bott}
When $J$ is integrable, i.e., $(M,J)$ is a compact complex manifold,
Theorem \ref{localization1} was established by Bott in \cite[Theorem
1]{Bo1} ($\textrm{zero}(A)$ is isolated) and \cite[Theorem 2]{Bo2}
(general case) by using direct differential-geometric arguments. The
current version was established by Atiyah and Singer in \cite[\S
~8]{AS}, which is a beautiful application of their general Lefschetz
fixed point formula.
\end{remark}

Note that, even if $\textrm{deg}(\varphi)>n$, $f_{\varphi}(A)$ is
still \emph{well-defined}. But Theorem \ref{localization1} says
\emph{nothing} for those $\varphi$ whose degrees are \emph{larger}
than $n$. Our first observation in this section is that our
Corollary \ref{corollary1} is equivalent to a vanishing result of
$f_{\varphi}(A)$ for some $\varphi$ whose degree is $n+1$. To be
more precise, if we use $c_i=c_i(\cdot,\ldots,\cdot)$ ($1\leq i\leq
n$) to denote the $i$-th elementary symmetric polynomial with $n$
variables, Corollary \ref{corollary1} can be rephrased as follows.

\begin{proposition}\label{prop}
$f_{c_1c_n}(A)\equiv 0$ for any compatible vector field $A$ on
$(M^{2n},J)$.
\end{proposition}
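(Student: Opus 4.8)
The plan is to evaluate $f_{c_1c_n}(A)$ directly from the definition given in Theorem \ref{localization1} and to recognize the outcome as $\sqrt{-1}$ times the quantity appearing in Corollary \ref{corollary1}; the asserted vanishing then follows from that corollary, and since the two expressions differ only by the nonzero scalar $\sqrt{-1}$, this also exhibits Proposition \ref{prop} as a mere rephrasing of Corollary \ref{corollary1}.

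First I would substitute $\varphi=c_1c_n$ into the formula defining $f_\varphi(A)$. Writing the $n$ arguments as $\alpha_1,\dots,\alpha_r$ together with $\gamma_j:=\sqrt{-1}\lambda_j+\beta_j$ for $1\le j\le n-r$, one has $c_n=\prod_{i=1}^r\alpha_i\cdot\prod_{j=1}^{n-r}\gamma_j$, so the factor $\prod_{j}\gamma_j$ in the numerator cancels the denominator $\prod_{j=1}^{n-r}(\sqrt{-1}\lambda_j+\beta_j)$ exactly; in particular no geometric-series expansion of the denominator is needed, and the summand attached to a component $Z$ of complex dimension $r$ is the honest integral
$$\int_Z\Big(\sum_{i=1}^r\alpha_i+\sum_{j=1}^{n-r}(\sqrt{-1}\lambda_j+\beta_j)\Big)\cdot\prod_{i=1}^r\alpha_i.$$

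Next I would carry out a degree count. The class $\prod_{i=1}^r\alpha_i$ is $c_r(TZ)$, which has top degree $2r=\dim_{\mathbb{R}}Z$, so multiplying it by any form of positive degree and integrating over $Z$ produces $0$; hence only the degree-zero part of the first factor survives. Since every $\alpha_i$ and every $\beta_j$ has positive degree, that degree-zero part is $\sqrt{-1}\sum_{j=1}^{n-r}\lambda_j$, and the summand for $Z$ becomes $\sqrt{-1}\big(\sum_{j=1}^{n-r}\lambda_j\big)\int_Z c_r(TZ)=\sqrt{-1}\,e(Z)\sum_{j=1}^{n-r}\lambda_j$, the last equality being the Gauss--Bonnet--Chern theorem, where $e(Z)$ is the Euler characteristic of $Z$ and each connected component of $\textrm{zero}(A)$ is a compact almost-complex manifold. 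Summing over all components of $\textrm{zero}(A)$ yields $f_{c_1c_n}(A)=\sqrt{-1}\sum_Z\big[e(Z)\sum_{i=1}^{n-r}\lambda_i\big]$, which vanishes by Corollary \ref{corollary1}.

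I do not expect a serious obstacle; the computation is short. The only points that want care are the exact cancellation of $\prod_j\gamma_j$ against the denominator --- this is what makes $f_{c_1c_n}(A)$ a genuine integral of a differential form rather than a formal object --- and the bookkeeping that singles out the constant term $\sqrt{-1}\sum_j\lambda_j$ of $c_1$; one should also keep in mind throughout that $r$, the numbers $\lambda_j$ and the integer $e(Z)$ all vary with the component $Z$, exactly as in the statements of Theorem \ref{localization1} and Corollary \ref{corollary1}.
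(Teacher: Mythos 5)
Your proposal is correct and follows essentially the same route as the paper: substitute $\varphi=c_1c_n$, cancel $\prod_j(\sqrt{-1}\lambda_j+\beta_j)$ against the denominator, use the fact that $\prod_i\alpha_i$ is the top-degree Euler class of $Z$ to isolate the constant term $\sqrt{-1}\sum_j\lambda_j$ of $c_1$, and conclude by Corollary \ref{corollary1}. The paper's own proof is exactly this computation, so nothing further is needed.
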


\begin{proof}
\be\bs &f_{c_1c_n}(A)\\
=&\sum_Z\int_Z\frac{c_1(\cdots)c_n(\cdots)}
{\prod_{j}^{n-r}(\sqrt{-1}\lambda_j+\beta_j)}\\
=&\sum_Z\int_Z\frac{\big[\sum_{i=1}^r\alpha_i+\sum_{i=1}^{n-r}
(\sqrt{-1}\lambda_i+\beta_i)\big]\cdot
\big[\prod_{i}^{r}\alpha_i\cdot\prod_{i}^{n-r}(\sqrt{-1}\lambda_i+\beta_i)\big]}
{\prod_{j}^{n-r}(\sqrt{-1}\lambda_j+\beta_j)}\\
=&\sum_Z\int_Z\big\{[\sum_{i=1}^r\alpha_i+\sum_{i=1}^{n-r}(\sqrt{-1}\lambda_i+\beta_i)]\cdot
\prod_{i=1}^r\alpha_i\big\}\\
=&\sum_Z[e(Z)\cdot\sum_{j=1}^{n-r}\sqrt{-1}\lambda_j]\\
=&0.\end{split}\nonumber\ee The fourth equality is due to the fact
that $\prod_{i=1}^r\alpha_i$ is nothing but the Euler class of $Z$
and $\text{dim}_{\mathbb{C}}Z=r$.
\end{proof}

\begin{remark}\label{remarkfutakimorita}
~
\begin{enumerate}
\item
When $M$ is a Fano manifold, i.e., a compact K\"{a}hler manifold
with positive first Chern class, Futaki and Morita showed that
(\cite{FM1}, \cite{FM2}), up to some constant, $f_{c_1^{n+1}}(A)$ is
nothing but the Futaki integral invariant with respect to the
holomorphic vector field $A$. This gives a geometric interpretation
of $f_{\varphi}(A)$ for $\varphi=c_1^{n+1}$ when $M$ is K\"{a}hler.
In contrast with this, it is somewhat surprising to see that
$f_{c_1c_n}(A)\equiv0$ for any compatible vector field $A$ on any
compact almost-complex manifold $M$.

\item
When $M$ is K\"{a}hler, $A$ is nondegenerate and $\textrm{zero}(A)$
only consists of isolated fixed points. Proposition \ref{prop} has
been obtained by Futaki and Morita (\cite{FM1}, \cite[p. 80]{Fu1}).
It is their this observation, together with their reinterpretation
of the Futaki integral invariant on Fano manifolds, that inspire our
this proposition and the current article.
\end{enumerate}
\end{remark}

As we have mentioned, Theorem \ref{localization1} says nothing for
those $\varphi$ whose degrees are larger than $n$. However, Futaki
and Morita noticed that (\cite{FM1}, \cite{FM2}, \cite[Chapter
5]{Fu}), for compact complex manifolds, Bott's original arguments in
\cite{Bo1} and \cite{Bo2} can also be applied to including
$\{\varphi~|~\textrm{deg}(\varphi)>n\}$. As an application, they
showed that, the famous Futaki integral invariant, which was
introduced by Futaki in \cite{Fu} and obstructs the existence of
K\"{a}hler-Einstein metrics on Fano manifolds, can be put into this
framework as a special case.

Now let us give a precise statement of Bott residue formula in this
strengthened version.

We denote by $I^{k}(gl(n,\mathbb{C}))$ $(0\leq k\leq n)$ the
$GL(n,\mathbb{C})$-invariant polynomial function of degree $k$ on
$gl(n,\mathbb{C})$, i.e., $\varphi\in I^{k}(gl(n,\mathbb{C}))$ means
that
$$\varphi~:~gl(n,\mathbb{C})\rightarrow\mathbb{C},$$
$$\varphi\big((a_{ij})_{n\times n}\big)=\sum\lambda_{i_1\cdots i_kj_1\cdots j_k}
a_{i_1j_1}\cdots a_{i_kj_k},$$

where
$$(a_{ij})_{n\times n}\in
gl(n,\mathbb{C})\qquad\text{and}\qquad\lambda_{i_1\cdots
i_kj_1\cdots j_k}\in\mathbb{C},$$

and satisfies
$$\varphi(PAP^{-1})=\varphi(A),\qquad \forall~ A\in
gl(n,\mathbb{C}),~ \forall ~P\in GL(n,\mathbb{C}).$$

It is well-known that
$$I^{\ast}(gl(n,\mathbb{C})):=\bigoplus_{k\geq
0}I^{k}(gl(n,\mathbb{C}))$$

is multiplicatively generated by $c_i$ $(0\leq i\leq n)$, which are
characterized by
$$\textrm{det}(I_n+tA)=:\sum_{i=0}^nt^i\cdot c_i(A),\qquad I_n=\textrm{$n\times n$ identity matrix}.$$

\begin{remark}
By a slight abuse of symbols, $c_i$ has at least four different
meanings in our article: the $i$-th elementary symmetric polynomial,
the $i$-th generator of $GL(n,\mathbb{C})$-invariant polynomial, the
$i$-th Chern form $c_i(\nabla):=c_i(\frac{\sqrt{-1}}{2\pi}R)$, and
the $i$-th Chern class of a complex vector bundle. The reason for
this abuse is clear to those who are familiar with the Chern-Weil
theory.
\end{remark}

Given $\varphi\in I^{\ast}(gl(n,\mathbb{C}))$, with the above
symbols and notation understood, we know that
$\varphi\big(L(X)+\frac{\sqrt{-1}}{2\pi}R\big)$ is a well-defined
differential form of possibly mixed degrees on $N$. The following
beautiful residue formula of Bott tells us that, under some
reasonable requirement on $X$, i.e., $X$ is \emph{non-degenerate}
(see Remark \ref{remarkafter} for a precise definition), the
evaluation of $\varphi\big(L(X)+\frac{\sqrt{-1}}{2\pi}R\big)$ on $N$
can be localized to that of $\textrm{zero}(X)$.

\begin{theorem}[Bott's residue formula, complex case]\label{localization2}
{\rm{With the above materials understood and assume that $X$ is
non-degenerate. For any $\varphi\in I^{\ast}(gl(n,\mathbb{C}))$, we
have
 \be\label{bottresidue}
\begin{split}
f_{\varphi}(X):&=\int_M\varphi\big(L(X)+\frac{\sqrt{-1}}{2\pi}R\big)\\
&=\sum_{Z\subset\textrm{zero}(X)}\int_Z
\frac{\varphi\big(L(X)\big|_Z+\frac{\sqrt{-1}}{2\pi}R\big|_Z\big)}
{\textrm{det}\big(L^{\mu}(X)+\frac{\sqrt{-1}}{2\pi}R^{\mu}\big)},\\
\end{split}\ee

where the sum is over all the connected components $Z$ of
$\textrm{zero}(X)$, $(\cdot)\big|_Z$ denotes the restriction to $Z$,
$L^{\mu}(X)\in\Gamma\big(\textrm{End}(\mu(Z))\big)$ is the induced
section of the normal bundle $\mu(Z)$ from $L(X)$, and $R^{\mu}$ is
the curvature form of $\mu(Z)$ with respect to the induced Hermitian
metric.}}
\end{theorem}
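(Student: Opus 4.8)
The plan is to prove Theorem \ref{localization2} by Bott's localization argument \cite{Bo1, Bo2}, the only new input being the observation of Futaki and Morita that the hypothesis $\textrm{deg}(\varphi)\leq n$ — indispensable in Theorem \ref{localization1}, where it is what allows the global side to be identified with the Chern number $\int_M\varphi(x_1,\ldots,x_n)$ — is never used once the global side is kept in Chern--Weil form. Throughout, $\int_M\varphi\big(L(X)+\frac{\sqrt{-1}}{2\pi}R\big)$ abbreviates the integral over $M$ of the form-degree-$2n$ component of the mixed-degree form $\varphi\big(L(X)+\frac{\sqrt{-1}}{2\pi}R\big)$, and the quotient in \eqref{bottresidue} denotes the form-degree-$2n$ component on $Z$ of $\varphi\big(L(X)|_Z+\frac{\sqrt{-1}}{2\pi}R|_Z\big)$ wedged with the inverse form $\textrm{det}\big(L^{\mu}(X)+\frac{\sqrt{-1}}{2\pi}R^{\mu}\big)^{-1}$; this makes sense precisely because the non-degeneracy of $X$ forces $L^{\mu}(X)\in\Gamma\big(\textrm{End}(\mu(Z))\big)$ to be fibrewise invertible, so that the denominator is a unit in the ring of even-degree forms on $Z$.

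First I would record that, $X$ being real holomorphic, $T^{1,0}N$ is naturally an $X$-equivariant holomorphic bundle whose moment is the Bott operator $L(X)$, so that for every $\varphi\in I^{\ast}(gl(n,\mathbb{C}))$ the mixed form $\varphi\big(L(X)+\frac{\sqrt{-1}}{2\pi}R\big)$ is closed for the twisted differential $d-c\,\iota_X$, where $\iota_X$ is contraction by the real vector field $X$ and $c$ a universal nonzero constant (this equivariant Bianchi identity is Bott's computation in the language of equivariant Chern--Weil theory). Next, on $N\setminus\textrm{zero}(X)$, where $X$ is nowhere zero, one builds — following Bott, using the Hermitian metric, or the holomorphic trivialization by $\xi=X-\sqrt{-1}JX$ when $X$ is not Killing — a $1$-form $\theta$ with $\theta(X)$ constant, and from $\theta$, $d\theta$ and $\varphi\big(L(X)+\frac{\sqrt{-1}}{2\pi}R\big)$ an explicit $(2n-1)$-form $\gamma$ with $d\gamma=\big[\varphi\big(L(X)+\frac{\sqrt{-1}}{2\pi}R\big)\big]_{2n}$ on $N\setminus\textrm{zero}(X)$. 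The essential point is that neither the closedness nor the construction of $\gamma$ refers to $\textrm{deg}(\varphi)$.

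Then I would localize by Stokes' theorem. Excising $\varepsilon$-tubular neighborhoods $U_{\varepsilon}(Z)$ of the finitely many connected components $Z\subset\textrm{zero}(X)$ and applying the previous step on the complement gives
$$\int_{M\setminus\bigcup_Z U_{\varepsilon}(Z)}\varphi\big(L(X)+\frac{\sqrt{-1}}{2\pi}R\big)=-\sum_{Z}\int_{\partial U_{\varepsilon}(Z)}\gamma,$$
whose left-hand side tends to $\int_M\varphi\big(L(X)+\frac{\sqrt{-1}}{2\pi}R\big)$ as $\varepsilon\to 0$. It then remains to compute $\lim_{\varepsilon\to0}\int_{\partial U_{\varepsilon}(Z)}\gamma$ for each $Z$. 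Working in a holomorphic tubular neighborhood of $Z$, in which $X$ agrees to first order along $\mu(Z)=\bigoplus_iL(Z,\lambda_i)$ with the linear fibrewise vector field prescribed by $L^{\mu}(X)$, one carries out Bott's explicit residue computation and finds this limit to equal $-\int_Z\varphi\big(L(X)|_Z+\frac{\sqrt{-1}}{2\pi}R|_Z\big)\cdot\textrm{det}\big(L^{\mu}(X)+\frac{\sqrt{-1}}{2\pi}R^{\mu}\big)^{-1}$. Summing over $Z$ and combining with the displayed equality yields \eqref{bottresidue}.

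The step I expect to be the main obstacle is this last one: the local residue calculation around each $Z$ has to be carried far enough to reproduce exactly the denominator $\textrm{det}\big(L^{\mu}(X)+\frac{\sqrt{-1}}{2\pi}R^{\mu}\big)$, and it rests on the local normal form of $X$ near $\textrm{zero}(X)$ together with careful bookkeeping of the various mixed form-degrees. A subsidiary difficulty is the construction of $\theta$ and $\gamma$ off $\textrm{zero}(X)$ when $X$ fails to be Killing, where one cannot simply average the metric to make $\theta$ be $X$-invariant and must instead follow Bott's original device built from $\xi$; but, just as in the previous step, nothing there depends on whether $\textrm{deg}(\varphi)$ exceeds $n$, which is exactly what makes the strengthened formula hold.
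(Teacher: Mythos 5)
The paper does not actually prove Theorem \ref{localization2}: it is quoted as a known result, with Remark \ref{remarkafter}(3) pointing to \cite[Theorem 5.2.8]{Fu1} and to \cite[p.~313]{Zhang} for proofs. Your outline is, in substance, the proof given in those references (and in Bott's original papers): the equivariant closedness of $\varphi\big(L(X)+\frac{\sqrt{-1}}{2\pi}R\big)$, a transgression form $\gamma$ built from a $1$-form dual to $X$ on $N\setminus\textrm{zero}(X)$, Stokes' theorem on the complement of tubular neighborhoods, and a local residue computation producing $\textrm{det}\big(L^{\mu}(X)+\frac{\sqrt{-1}}{2\pi}R^{\mu}\big)^{-1}$. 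You also correctly isolate the single point the paper cares about, namely the Futaki--Morita observation that nothing in this argument uses $\textrm{deg}(\varphi)\leq n$, and you correctly explain why non-degeneracy makes the denominator a unit. Two caveats keep this from being self-contained. First, the ``equivariant Bianchi identity'' is where the hypotheses really enter: for the Chern connection of a Hermitian (not necessarily K\"ahler) metric the relevant identity is Bott's $\iota_{\xi}R=\bar{\partial}L(X)$ with $\xi=X-\sqrt{-1}JX$ holomorphic, so the transgression is naturally a $\bar{\partial}$-argument on forms of type $(n,q)$ rather than a $d-c\,\iota_X$ argument with the real field $X$; your phrasing blurs the Killing/equivariant picture with Bott's holomorphic one, and in the non-K\"ahler case only the latter is available. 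Second, the boundary-integral evaluation that produces exactly $\textrm{det}\big(L^{\mu}(X)+\frac{\sqrt{-1}}{2\pi}R^{\mu}\big)^{-1}$ (including the degree bookkeeping for mixed-degree $\varphi$) is asserted, not performed; this is precisely the content of \cite[Theorem 5.2.8]{Fu1}. As a reduction to the cited literature your proposal is sound and matches the route the paper implicitly relies on, but the two computational cores would have to be written out for a complete proof.
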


Some more remarks related to Theorem \ref{localization2} are in
order.

\begin{remark}\label{remarkafter}
~
\begin{enumerate}
\item
The precise meaning of \emph{non-degenerate} is that (\cite[p.
314]{Bo2}) each $Z$ is a complex submanifold of $N$ and the kernel
of the endomorphism $L(X)\big|_Z$ on $T^{1,0}N\big|_Z$ is precisely
$T^{1,0}Z$. Thus non-degeneracy guarantees that both the denominator
and the integral on the right hand side of (\ref{bottresidue}) be
well-defined. When this $X$ is Killing, which is the requirement in
Theorem \ref{localization1}, it is always non-degenerate according
to the \emph{compact} transformation group theory. So despite the
integrability condition, the assumption on the vector field in this
theorem is weaker than that in Theorem \ref{localization1}. The
reason is that the proof of the latter is based on the Lefschetz
fixed point formula of Aityah-Bott-Segal-Singer, which needs the
group acted on the manifold to be compact.\\

\item
When $\textrm{deg}(\varphi)\leq n$, we have
$$\int_M\varphi\big(L(X)+\frac{\sqrt{-1}}{2\pi}R\big)=\int_M\varphi(\frac{\sqrt{-1}}{2\pi}R)$$

and thus (\ref{bottresidue}) becomes
 \be\label{bottresidue2}
\int_M\varphi(\frac{\sqrt{-1}}{2\pi}R)
=\sum_{Z\subset\textrm{zero}(X)}\int_Z
\frac{\varphi\big(L(X)\big|_Z+\frac{\sqrt{-1}}{2\pi}R\big|_Z\big)}
{\textrm{det}\big(L^{\mu}(X)+\frac{\sqrt{-1}}{2\pi}R^{\mu}\big)}.
\nonumber\ee

This, via the Chern-Weil theory, exactly corresponds to the
localization formula in Theorem \ref{localization1}. This is what
Bott's original statement presents. Using the current version of
Bott's residue formula, Futaki-Morita showed that, up to some
constant factor, the original Futaki invariant is essentially equal
to $f_{c_1^{n+1}}(X)$ and so the calculation of the Futaki invariant
can also be localized to the zero point of the holomorphic vector
field explicitly by Theorem \ref{localization2}. Later, Bott's idea
was further extracted by Tian in \cite[\S~ 6]{Ti} to give a residue
formula of the Calabi-Futaki integral invariant, which obstrcuts the
existence of constant scalar curvature metrics in a given K\"{a}hler
class.\\

\item
A detailed proof of (\ref{bottresidue}) can be found in
\cite[Theorem 5.2.8]{Fu1}. However, an implicit but more concise
proof can also be found in \cite[p. 313]{Zhang}.
\end{enumerate}
\end{remark}

By virtue of the above discussion, combining Proposition \ref{prop}
with Theorem \ref{localization2} can immediately yield the following
result.
\begin{proposition}\label{prop2}
{\rm{Suppose $(N^{2n},~J,~g)$ is a compact Hermitian manifold, $X$ a
Killing  and real holomorphic vector field on $N$ and $\nabla$ the
Chern connection. Then we have
$$\int_Mc_1c_n
\big(L(X)+\frac{\sqrt{-1}}{2\pi}R\big)=\sum_Z\big[\textrm{tr}\big(L^{\mu}(X)\big)\cdot
e(Z)\big]=0.$$ }}
\end{proposition}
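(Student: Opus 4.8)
The plan is to derive Proposition \ref{prop2} by feeding the specific invariant polynomial $\varphi = c_1c_n$ into Bott's residue formula (Theorem \ref{localization2}) and then identifying the right-hand side with the quantity computed in Proposition \ref{prop}. First I would note that since $X$ is Killing it is in particular non-degenerate (Remark \ref{remarkafter}(1)), so Theorem \ref{localization2} applies with $\varphi = c_1c_n$; the left-hand side is exactly $\int_M c_1c_n\big(L(X)+\tfrac{\sqrt{-1}}{2\pi}R\big)$, which by the equivalence of Bott's formula with the Atiyah-Bott-Singer formula via Chern-Weil theory (Remark \ref{remarkafter}(2), extended to $\mathrm{deg}(\varphi) = n+1$ in the Futaki-Morita strengthened form discussed in Section \ref{section2}) equals $f_{c_1c_n}(X)$ in the sense of Theorem \ref{localization1}'s notation. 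By Proposition \ref{prop}, $f_{c_1c_n}(X) \equiv 0$. So the task reduces to showing that the localized right-hand side of (\ref{bottresidue}) for $\varphi = c_1 c_n$ equals $\sum_Z\big[\mathrm{tr}\big(L^\mu(X)\big)\cdot e(Z)\big]$.

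The key computation is the contribution of a single connected component $Z$ of complex dimension $r$. On $Z$, the endomorphism $L(X)\big|_Z$ acts on $T^{1,0}N\big|_Z = T^{1,0}Z \oplus \mu(Z)$, with $L(X)\big|_Z$ vanishing on $T^{1,0}Z$ (by non-degeneracy) and restricting to the invertible endomorphism $L^\mu(X)$ on $\mu(Z)$. Writing $c_1 c_n(B) = \mathrm{tr}(B)\cdot\mathrm{det}(B)$ for a matrix $B$, and plugging in $B = L(X)\big|_Z + \tfrac{\sqrt{-1}}{2\pi}R\big|_Z$, the numerator of the $Z$-term becomes
\[
\Big[\mathrm{tr}\big(L(X)\big|_Z\big) + \tfrac{\sqrt{-1}}{2\pi}\mathrm{tr}\big(R\big|_Z\big)\Big]\cdot \mathrm{det}\big(L(X)\big|_Z + \tfrac{\sqrt{-1}}{2\pi}R\big|_Z\big).
\]
Using the block structure, $\mathrm{det}\big(L(X)\big|_Z + \tfrac{\sqrt{-1}}{2\pi}R\big|_Z\big)$ factors as the product of the $T^{1,0}Z$-block determinant $\mathrm{det}\big(\tfrac{\sqrt{-1}}{2\pi}R^Z\big) = c_r(\nabla^Z)$, i.e.\ the top Chern form (Euler form) of $Z$, and the $\mu(Z)$-block determinant $\mathrm{det}\big(L^\mu(X) + \tfrac{\sqrt{-1}}{2\pi}R^\mu\big)$, which is precisely the denominator in (\ref{bottresidue}). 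Hence the denominator cancels, leaving the $Z$-contribution as $\int_Z \big[\mathrm{tr}\big(L(X)\big|_Z\big) + \tfrac{\sqrt{-1}}{2\pi}\mathrm{tr}\big(R\big|_Z\big)\big]\cdot c_r(\nabla^Z)$. Since $c_r(\nabla^Z)$ is already a top-degree form on $Z$, only the degree-zero part of the first bracket survives integration, namely $\mathrm{tr}\big(L(X)\big|_Z\big) = \mathrm{tr}\big(L^\mu(X)\big)$ (as $L(X)$ kills $T^{1,0}Z$), a locally constant function on $Z$; and $\int_Z c_r(\nabla^Z) = e(Z)$. Summing over $Z$ gives $\sum_Z \mathrm{tr}\big(L^\mu(X)\big)\cdot e(Z)$, which is therefore $0$.

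The step I expect to be the main obstacle is making the block-decomposition argument rigorous — specifically, justifying that $L(X)\big|_Z$ and $\tfrac{\sqrt{-1}}{2\pi}R\big|_Z$ are \emph{simultaneously} block-diagonal with respect to the orthogonal splitting $T^{1,0}N\big|_Z = T^{1,0}Z\oplus\mu(Z)$, so that the determinant genuinely factors. For $L(X)$ this is the content of non-degeneracy plus the fact that $\mu(Z)$ is $L^\mu(X)$-invariant; for the curvature $R\big|_Z$ one must invoke that the Chern connection restricted to $Z$ respects the holomorphic (hence metric, in the Killing/compact-group setting) decomposition into $T^{1,0}Z$ and its normal complement — this is standard in the fixed-point-set analysis of Section \ref{section1.5} but should be stated carefully, perhaps with a one-line reference to \cite{Ko} or to the derivation of Theorem \ref{localization2} in \cite[Theorem 5.2.8]{Fu1}. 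Once that is in hand, everything else is the routine identity $c_1 c_n = \mathrm{tr}\cdot\mathrm{det}$ together with degree bookkeeping, and the final equality to zero is immediate from Proposition \ref{prop}.
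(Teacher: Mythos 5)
Your proposal is correct and is exactly the combination of Proposition \ref{prop} with Theorem \ref{localization2} that the paper itself invokes (the paper gives no further details, so you are simply filling in the intended argument). The only remark worth adding is that the form-level block-diagonality of $R\big|_Z$ that you flag as the delicate point can be sidestepped entirely: since the localized integrand is closed and each $\int_Z$ only sees cohomology classes, the Whitney splitting $c(T^{1,0}N\big|_Z)=c(T^{1,0}Z)\cdot c(\mu(Z))$ together with $\mathrm{tr}\big(L^{\mu}(X)\big)=\sqrt{-1}\sum_j\lambda_j$ reduces the right-hand side of (\ref{bottresidue}) to the expression already computed in the proof of Proposition \ref{prop}.
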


\section{Applications}\label{section3}
In this section, we will present two applications of Propositions
\ref{prop} and \ref{prop2}. The first one is the proof of Theorem
\ref{mainresult2}, which is almost immediate from Proposition
\ref{prop2}. The second one is based on an interesting observation
in \cite{FM1}.

First, we prove Theorem \ref{mainresult2}, which we restate here
again.
\begin{theorem}
{\rm{Suppose $(N^{2n},~J,~g)$ is a compact Hermitian manifold and
$X$ is a Killing and real holomorphic vector field. Then we have
$$\int_M\textrm{tr}\big(L(X)\big)\cdot c_n(\nabla)+
\int_Mc_1(\nabla)\cdot\textrm{det}
\big(L(X)+\frac{\sqrt{-1}}{2\pi}R\big)=0.$$

Here $c_i(\nabla):=c_i(\frac{\sqrt{-1}}{2\pi}R)$ is the $i$-th Chern
form with respect to the Chern connection $\nabla$.}}
\end{theorem}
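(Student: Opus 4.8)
The plan is to expand the invariant polynomial $c_1 c_n$ evaluated at $L(X)+\frac{\sqrt{-1}}{2\pi}R$ and show that it splits, after integration over $N$, into exactly the two terms appearing in the statement. Recall that $c_1(B)=\mathrm{tr}(B)$ and $c_n(B)=\det(B)$ for an $n\times n$ matrix $B$, so as a polynomial identity $c_1(B)\,c_n(B)=\mathrm{tr}(B)\cdot\det(B)$. Substituting $B=L(X)+\frac{\sqrt{-1}}{2\pi}R$ and using the derivation/multilinearity of $\mathrm{tr}$,
\[
c_1 c_n\Big(L(X)+\tfrac{\sqrt{-1}}{2\pi}R\Big)
=\mathrm{tr}\big(L(X)\big)\cdot\det\Big(L(X)+\tfrac{\sqrt{-1}}{2\pi}R\Big)
+\mathrm{tr}\Big(\tfrac{\sqrt{-1}}{2\pi}R\Big)\cdot\det\Big(L(X)+\tfrac{\sqrt{-1}}{2\pi}R\Big).
\]
The first summand is a form of mixed degree, but for the purpose of integrating over the $n$-dimensional complex manifold $N$ only its $(n,n)$-component survives; similarly in the second summand $\mathrm{tr}\big(\frac{\sqrt{-1}}{2\pi}R\big)=c_1(\nabla)$ is a $(1,1)$-form and we pair it with the $(n-1,n-1)$-component of $\det\big(L(X)+\frac{\sqrt{-1}}{2\pi}R\big)$.

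Next I would invoke Proposition~\ref{prop2}, which (under the standing hypotheses that $g$ is Hermitian, $X$ is Killing and real holomorphic, so in particular $X$ is non-degenerate) asserts that
\[
\int_N c_1 c_n\Big(L(X)+\tfrac{\sqrt{-1}}{2\pi}R\Big)=0.
\]
Integrating the displayed polynomial identity over $N$ and substituting this vanishing gives precisely
\[
\int_N \mathrm{tr}\big(L(X)\big)\cdot\det\Big(L(X)+\tfrac{\sqrt{-1}}{2\pi}R\Big)
+\int_N c_1(\nabla)\cdot\det\Big(L(X)+\tfrac{\sqrt{-1}}{2\pi}R\Big)=0,
\]
and since $c_n(\nabla)=\det\big(\frac{\sqrt{-1}}{2\pi}R\big)$ is the component that multiplies the scalar-valued $\mathrm{tr}(L(X))$ to produce a top-degree form, the first integral is exactly $\int_N \mathrm{tr}(L(X))\cdot c_n(\nabla)$, which is the claimed formula. (Here I am silently identifying $M$ with $N$, following the paper's notational convention in this section.)

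The only genuinely nontrivial input is Proposition~\ref{prop2}, which itself rests on combining Proposition~\ref{prop} ($f_{c_1c_n}(A)\equiv 0$, i.e.\ the reinterpretation of Corollary~\ref{corollary1}) with the strengthened Bott residue formula (Theorem~\ref{localization2}) applied to $\varphi=c_1c_n$ of degree $n+1$; all of that is established earlier in the excerpt and may be assumed. So the remaining work is purely bookkeeping: carefully matching homogeneous bidegree components so that the mixed-degree form $\det\big(L(X)+\frac{\sqrt{-1}}{2\pi}R\big)$ contributes its $(n,n)$-part in the first integral and its $(n-1,n-1)$-part in the second, exactly as flagged in the Remark following the theorem statement. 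The main (mild) obstacle is making this degree-truncation explicit and verifying that no cross terms are lost when passing from the polynomial identity to the integrated identity; but this is routine once one observes that $\mathrm{tr}$ and $\det$ of an $\mathrm{End}$-valued form are computed via the usual Chern-Weil substitution and that $\int_N$ annihilates every component except the top one.
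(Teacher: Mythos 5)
Your proposal is correct and follows essentially the same route as the paper: invoke Proposition \ref{prop2} to get $\int_M c_1c_n\big(L(X)+\frac{\sqrt{-1}}{2\pi}R\big)=0$, expand $c_1c_n(B)=\mathrm{tr}(B)\det(B)$ with the trace split as $\mathrm{tr}(L(X))+c_1(\nabla)$, and keep only the $(n,n)$-components under the integral, which forces the pairing of $\mathrm{tr}(L(X))$ with $c_n(\nabla)$ and of $c_1(\nabla)$ with the $(n-1,n-1)$-part of the determinant. The paper's own proof is exactly this degree bookkeeping, so no further comment is needed.
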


\begin{proof}
By Proposition \ref{prop2} we have
$$\int_Mc_1c_n\big(L(X)+\frac{\sqrt{-1}}{2\pi}R\big)=0.$$

Note that
\be
\begin{split}&c_1c_n\big(L(X)+\frac{\sqrt{-1}}{2\pi}R\big)\\
=&\textrm{tr}\big(L(X)+\frac{\sqrt{-1}}{2\pi}R\big)\cdot
\textrm{det}\big(L(X)+\frac{\sqrt{-1}}{2\pi}R\big)\\
=&\big[\textrm{tr}\big(L(X)\big)+\textrm{tr}\big(\frac{\sqrt{-1}}{2\pi}R\big)\big]\\
&\cdot\big[\textrm{det}(\frac{\sqrt{-1}}{2\pi}R)+
\{\textrm{det}\big(L(X)+
\frac{\sqrt{-1}}{2\pi}R\big)\}^{(n-1)}+\textrm{lower degree
terms}\big]\\
=&\big[\textrm{tr}\big(L(X)\big)+c_1(\nabla)\big]\cdot
\big[c_n(\nabla)+ \{\textrm{det}\big(L(X)+
\frac{\sqrt{-1}}{2\pi}R\big)\}^{(n-1)}+\textrm{lower degree
terms}\big]\\
=&\textrm{tr}\big(L(X)\big)\cdot c_n(\nabla)+ c_1(\nabla)\cdot
\{\textrm{det}\big(L(X)+ \frac{\sqrt{-1}}{2\pi}R\big)\}^{(n-1)}.
\end{split}\nonumber\ee

Here $\{\cdots\}^{(n-1)}$ means the component of $(n-1,n-1)$-form in
$\{\cdots\}$.  Now the equality in the above theorem follows easily
from this deduction.
\end{proof}

\begin{remark}
It is clear that the statement itself is purely
differential-geometric. But the author does not know whether or not
this result could be proved by a direct differential-geometric
argument.
\end{remark}

Our second application in this section is based on an observation in
\cite{FM1}, which has been mentioned in Remark
\ref{remarkfutakimorita}.

Besides the reinterpretation of the Futaki invariant, Futaki and
Morita also proved many other interesting results related to
$f_{\varphi}(X)$. \cite{FM1} is an announcement of the properties of
these integral invariants, whose proofs are contained in \cite{FM2}
and chapter 5 of Futaki's book \cite{Fu1}. For example, they found
that $c_1c_{n}$ is the \emph{unique} monomial of degree $n+1$
satisfying $f_{c_1c_n}(X)\equiv0$ for \emph{any} non-degenerate
holomorphic vector field whose zero points are \emph{isolated} on
\emph{any K\"{a}hler} manifold. Now combining their observation and
our Proposition \ref{prop}, we have the following vanishing-type and
uniqueness result.

\begin{theorem}
{\rm{Among all the monomials of degree $n+1$
$$\{c_1^{s_1}\cdots
c_n^{s_n}~\big|~\sum_{i=0}^n i\cdot
s_i=n+1,~s_i\in\mathbb{Z},~s_i\geq 0\},$$

$c_1c_n$ is the \emph{unique} monomial which satisfies
$f_{c_1c_n}(A)=0$ for \emph{any} compatible vector field $A$ on
\emph{any} almost-complex manifold $(M^{2n},J)$.}}
\end{theorem}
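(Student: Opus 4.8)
The plan is to combine the vanishing half already established in Proposition~\ref{prop}, namely $f_{c_1c_n}(A)\equiv 0$, with the uniqueness half observed by Futaki and Morita. First I would record that $f_{c_1c_n}(A)=0$ for \emph{every} compatible vector field $A$ on \emph{every} compact almost-complex manifold $(M^{2n},J)$; this is precisely the content of Proposition~\ref{prop} and requires no new work. So the only thing left to prove is that $c_1c_n$ is the \emph{unique} such monomial, i.e.\ that for any other monomial $\varphi=c_1^{s_1}\cdots c_n^{s_n}$ of degree $n+1$ with $\varphi\neq c_1c_n$, there exists some almost-complex manifold and some compatible vector field $A$ on it with $f_{\varphi}(A)\neq 0$.

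The key point is that the class of examples needed to witness non-vanishing can be taken inside the K\"ahler (indeed projective) category, where Futaki and Morita have already done the work. Their result, quoted in Remark~\ref{remarkfutakimorita} and in the discussion preceding this theorem, is that among degree-$(n+1)$ monomials $c_1c_n$ is the only one with $f_{c_1c_n}(X)\equiv 0$ for \emph{all} non-degenerate holomorphic vector fields with isolated zeros on \emph{all} K\"ahler manifolds. Concretely, for any monomial $\varphi\neq c_1c_n$ of degree $n+1$ they produce a K\"ahler manifold $N$ and a non-degenerate holomorphic vector field $X$ with isolated zeros such that $f_\varphi(X)\neq 0$. Such an $X$ need not be Killing a priori, but one may take $N=\mathbb{C}P^n$ (or a product of projective spaces) with a linear torus action, whose generating vector fields are simultaneously holomorphic, non-degenerate with isolated fixed points, \emph{and} Killing for the Fubini--Study metric; thus the witness $X$ is in fact a compatible vector field in the sense of this paper. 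This upgrades the Futaki--Morita examples into examples of compatible vector fields on almost-complex (indeed complex) manifolds, which is all the theorem asks for.

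Putting the two halves together: Proposition~\ref{prop} gives $f_{c_1c_n}(A)\equiv 0$, and the Futaki--Morita examples (realized by linear torus actions on products of projective spaces) show that no other monomial of degree $n+1$ has this property, even when one restricts attention to compatible vector fields. Hence $c_1c_n$ is the unique monomial of degree $n+1$ with the stated vanishing property.

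The main obstacle I anticipate is purely bookkeeping rather than conceptual: one must verify carefully that the explicit non-degenerate holomorphic vector fields used by Futaki and Morita to detect $f_\varphi(X)\neq 0$ can always be chosen to be Killing (equivalently, compatible). This is true because the detecting examples are built from linear diagonal actions of tori on $\mathbb{C}P^{n_1}\times\cdots\times\mathbb{C}P^{n_k}$ with $\sum n_j=n$, and for such actions the residue formula $f_\varphi$ can be computed combinatorially in terms of the weights; choosing the weights generically makes the zero set isolated, and the product Fubini--Study metric is automatically invariant, so the generator is Killing. The only genuinely delicate step is confirming that for \emph{each} monomial $\varphi\neq c_1c_n$ of degree $n+1$ one can tune the weights (and the integers $n_j$) so that the resulting rational expression does not vanish identically; this is exactly the computation carried out in \cite{FM2} and \cite[Chapter~5]{Fu1}, which I would invoke rather than reprove.
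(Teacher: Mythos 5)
Your proposal is correct and takes essentially the same route as the paper, which likewise obtains the vanishing half from Proposition \ref{prop} and delegates the uniqueness half to the Futaki--Morita computation in \cite{FM2} and \cite[Chapter 5]{Fu1}. The one point where you go beyond the paper is in noting that the Futaki--Morita witnesses a priori lie in a different class (non-degenerate holomorphic vector fields with isolated zeros, not necessarily Killing), so one must check that they can be realized by \emph{compatible} vector fields; your observation that the detecting examples are linear torus actions on products of projective spaces, hence Killing for the Fubini--Study metric, correctly closes a step the paper leaves implicit.
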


\section{Proof of Theorem \ref{mainresult1}}\label{section4}
The proof of Theorem \ref{mainresult1} is an interesting application
of the rigidity property of the Hirzebruch $\chi_y$-genus. This
rigidity phenomenon, which was first observed by Lusztig in
\cite{Lu}, is a striking application of the general Lefschetz fixed
point formula developed by Atiyah, Bott, Segal and Singer. Recently
the author refines this observation and gives some applications to
symplectic geometry and related topics (\cite{Li}, \cite{Li3}).

The idea of the proof of Theorem \ref{mainresult1} basically follows
that of the main theorem in \cite{Li3}. However, for the reader's
convenience, we still sketch its proof.

As usual we use $\bar{\partial}$ to denote the $d$-bar operator
which acts on the complex vector spaces $\Omega^{p,q}(M)$ ($0\leq
p,q\leq n$) of $(p,q)$-type differential forms on $(M^{2n},J)$ in
the sense of $J$. The choice of the almost Hermitian metric $g$ on
$(M^{2n},J)$ enables us to define the Hodge star operator $\ast$ and
the formal adjoint $\bar{\partial}^{\ast}=-\ast\bar{\partial}~\ast$
of the $\bar{\partial}$-operator. Then for each $0\leq p\leq n$, we
have the following Dolbeault-type elliptic operator

\be\label{GDC}\bigoplus_{\textrm{$q$
even}}\Omega^{p,q}(M)\xrightarrow
{\bar{\partial}+\bar{\partial}^{\ast}} \bigoplus_{\textrm{$q$
odd}}\Omega^{p,q}(M),\ee

whose index is denoted by $\chi^{p}(M)$ in the notation of
Hirzebruch. We define the Hirzebruch $\chi_{y}$-genus,
$\chi_{y}(M)$, by

$$\chi_{y}(M):=\sum_{p=0}^{n}\chi^{p}(M)\cdot y^{p}.$$

The general form of the Hirzebruch-Riemann-Roch theorem allows us to
compute $\chi_y(M)$ in terms of the Chern numbers of $M$ as follows.

$$\chi_y(M)=\int_M\prod_{i=1}^n\frac{x_i(1+ye^{-x_i})}{1-e^{-x_i}}.$$

The proof of Theorem \ref{mainresult1} can be divided into the
following four steps, which we encode by four lemmas.\\

\emph{$\mathbf{STEP ~1.}$}

The first lemma we present below is refined from \cite{Lu}, which is
a beautiful application of the Lefschetz fixed point theorem for
elliptic complexes (\ref{GDC}).
\begin{lemma}
The following identity holds

\be\label{ABSS}\chi_y(M)\equiv\sum_Z\int_Z\big(\prod_{i=1}^r\alpha_i\frac{1+ye^{-\alpha_i}}{1-e^{-\alpha_i}}\big)\big(\prod
_{j=1}^{n-r}\frac{1+ye^{\sqrt{-1}\lambda_jt}e^{-\beta_j}}{1-e^{\sqrt{-1}\lambda_jt}e^{-\beta_j}}\big),\qquad
\forall ~t,\ee

i.e., the right-hand side of (\ref{ABSS}), when taken as a rational
function of $t$, is identically equal to $\chi_y(M)$.
\end{lemma}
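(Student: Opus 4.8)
The plan is to derive \eqref{ABSS} as a direct application of the Atiyah--Bott--Segal--Singer holomorphic Lefschetz fixed point formula to the one-parameter group $g_t = \mathrm{exp}(tA)$ acting on the elliptic complex \eqref{GDC}, and then to observe that the resulting equivariant index is \emph{independent of $t$} because it is a representation-theoretic invariant of a compact connected Lie group action, hence equals its value at $t=0$, which is the ordinary index $\chi_y(M)$. Concretely, first I would fix the compact group $G = \overline{\{\mathrm{exp}(tA)\}}$ (a torus, since it is compact connected abelian) acting on $(M,J,g)$ by isometries preserving $J$; this $G$-action lifts canonically to the Dolbeault-type complex $\bigoplus_q \Omega^{p,q}(M)$ for each $p$, so each index space $\chi^p(M)$ carries a virtual $G$-representation, and $\chi_y^G(M) := \sum_p \chi^p_G(M)\, y^p \in R(G)[y]$ is the equivariant $\chi_y$-genus. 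Evaluating the character of this virtual representation at $g_t$ gives a function of $t$; since the character of a fixed finite-dimensional virtual representation of $G$ is continuous (indeed a finite sum of exponentials $e^{\sqrt{-1}\lambda t}$), and since over the \emph{connected} group $G$ the virtual representation $\chi_y^G(M)$ must be built from the trivial one in the appropriate sense — more precisely, the key input from \cite{Lu} is that the equivariant $\chi_y$-genus is rigid, i.e. $\chi_y^G(M) = \chi_y(M)\cdot \mathbf{1}$ as an element of $R(G)[y]$ — the character evaluated at any $g_t$ equals $\chi_y(M)$.

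Second, I would compute the left-hand side, i.e. the character of $\chi_y^G(M)$ evaluated at $g_t$, by the holomorphic Lefschetz formula of Atiyah--Bott--Segal--Singer applied to $g_t$ and the twisted Dolbeault complex. The fixed point set of $g_t$ (for $t$ generic, and in fact for all $t$ by the structure of $\mathrm{zero}(A)$ recalled in Section \ref{section1.5}) is exactly $\mathrm{zero}(A) = \coprod_Z Z$. On each component $Z$ of complex dimension $r$, the contribution is the integral over $Z$ of the equivariant Todd-type class of $TZ$ times the equivariant $\chi_y$-contribution of the normal bundle $\nu(Z) = \bigoplus_{j=1}^{n-r} L(Z,\lambda_j)$, on which $g_t$ acts by $e^{\sqrt{-1}\lambda_j t}$. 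Plugging the standard multiplicative characteristic-class expressions — $\prod_{i=1}^r \alpha_i \frac{1+y e^{-\alpha_i}}{1-e^{-\alpha_i}}$ for the tangent part of $Z$, and $\prod_{j=1}^{n-r} \frac{1 + y e^{\sqrt{-1}\lambda_j t} e^{-\beta_j}}{1 - e^{\sqrt{-1}\lambda_j t} e^{-\beta_j}}$ for the normal part with its $g_t$-weights — yields precisely the right-hand side of \eqref{ABSS}. Combining the two computations gives the identity, valid for all $t$ for which the denominators are nonzero (i.e. $e^{\sqrt{-1}\lambda_j t}\neq 1$), and then the rational-function statement follows since both sides are rational in $e^{\sqrt{-1}\lambda t}$.

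The main obstacle, and the place where care is genuinely needed, is justifying that the equivariant $\chi_y$-genus is rigid, i.e. that $\chi_y^G(M)$ is a constant (trivial-representation) multiple of $\chi_y(M)$ rather than a genuinely nontrivial virtual character. In the integrable (complex) setting this is classical, but here $(M,J)$ is only almost-complex, so one cannot simply invoke holomorphic arguments; the correct framework is Lusztig's observation \cite{Lu} that for a \emph{connected} compact Lie group $G$ acting on a closed almost-complex (or stably almost-complex) manifold preserving $J$, the equivariant index of the family \eqref{GDC} is rigid — equivalently, its character is constant on $G$. I would cite \cite{Lu} together with the author's refinements in \cite{Li}, \cite{Li3} for this input, noting that since $G$ is connected the character, being a continuous class function that is also a $\mathbb{Z}[y]$-combination of irreducible characters forced to be constant, must be the constant $\chi_y(M)$; evaluating at $t=0$ (where $g_0 = \mathrm{id}$ and the Lefschetz formula degenerates to Hirzebruch--Riemann--Roch) identifies that constant. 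A secondary, more routine point is checking that the fixed-point data of $g_t$ is independent of $t$ and coincides with the decomposition of $\mathrm{zero}(A)$ described in Section \ref{section1.5} — this is exactly the compact transformation group theory already cited there, so it can be invoked rather than reproved.
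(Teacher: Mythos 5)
Your overall route is the same as the paper's: lift $\exp(tA)$ to the Dolbeault-type complexes \eqref{GDC}, apply the Atiyah--Bott--Segal--Singer fixed point formula to identify the right-hand side of \eqref{ABSS} with the equivariant character $\chi_y(t,M)$, and then invoke rigidity of the equivariant $\chi_y$-genus to conclude that this character is the constant $\chi_y(M)$. Your identification of the local contributions (the tangential factor $\prod_{i}\alpha_i\frac{1+ye^{-\alpha_i}}{1-e^{-\alpha_i}}$ and the normal factor carrying the weights $e^{\sqrt{-1}\lambda_j t}$) is correct and matches the paper.

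The gap is in your justification of rigidity, which is the only nontrivial step. You assert that the character, being a continuous class function and a $\mathbb{Z}[y]$-combination of irreducible characters of the \emph{connected} group $G$, is ``forced to be constant.'' That is not an argument: nontrivial irreducible characters of a connected compact group (e.g. $e^{\sqrt{-1}\theta}$ on $S^1$) are continuous class functions and are not constant, so connectedness alone forces nothing; as written this is just a restatement of the rigidity you are trying to establish. Deferring entirely to \cite{Lu}, \cite{Li}, \cite{Li3} would be defensible, but the mechanism you sketch in its place is wrong, and the correct mechanism is exactly what the paper uses: each normal factor $\frac{1+ye^{\sqrt{-1}\lambda_jt}e^{-\beta_j}}{1-e^{\sqrt{-1}\lambda_jt}e^{-\beta_j}}$ tends to $-y$ or to $1$ as $\sqrt{-1}t\to+\infty$ or $-\infty$ according to the sign of $\lambda_j$, so the fixed-point side of \eqref{ABSS} has finite limits at both ends; on the other hand, because the closure of $\{\exp(tA)\}$ is compact, each $\chi^p(t,M)$ is a \emph{finite} sum $\sum_i a_i(p)e^{\sqrt{-1}t\theta_i(p)}$ with $a_i(p)\in\mathbb{Z}\setminus\{0\}$, and such a sum admits finite limits as $\sqrt{-1}t\to\pm\infty$ only if every $\theta_i(p)=0$. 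This is what forces $\chi^p(t,M)$ to be constant, hence equal to its value $\chi^p(M)$ at $t=0$. A minor further slip: the fixed point set of $g_t$ equals $\mathrm{zero}(A)$ only for generic $t$, not ``for all $t$'' (for special $t$ the map $g_t$ can be the identity); this is harmless, since both sides of \eqref{ABSS} are rational in $e^{\sqrt{-1}t}$ and it suffices to establish the identity for generic $t$.
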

\begin{proof}
The action of the one-parameter group $\textrm{exp}(tA)$ on $(M,J)$
can be lifted to the elliptic complex (\ref{GDC}). Thus we can
define the equivariant index $\chi^p(t,M)$ and the equivariant
$\chi_y$-genus $\chi_y(t,M):=\sum_{p=0}^n\chi^p(t,M)\cdot y^p$. The
Lefschetz fixed point formula of Atiyah-Bott-Segal-Singer (\cite[p.
562]{AS}) allows us to compute $\chi_y(t,M)$ in terms of the local
information around the fixed point set of the one-parameter group
action $\textrm{exp}(tA)$, which is exactly $\textrm{zero}(A)$, as
follows.

\be\label{ABSS2}\chi_y(t,M)=\sum_Z\int_Z\big(\prod_{i=1}^r\alpha_i\frac{1+ye^{-\alpha_i}}{1-e^{-\alpha_i}}\big)\big(\prod
_{j=1}^{n-r}\frac{1+ye^{\sqrt{-1}\lambda_jt}e^{-\beta_j}}{1-e^{\sqrt{-1}\lambda_jt}e^{-\beta_j}}\big).\ee

Note that the right-hand side of (\ref{ABSS2}) has well-defined
limits as $\sqrt{-1}t$ tends to $+\infty$ and $-\infty$:

$$\lim_{\sqrt{-1}t\rightarrow+\infty}\big(\textrm{RHS of (\ref{ABSS2})}\big)
=\sum_Z\chi_y(Z)(-y)^{d_{+}(Z)},$$
$$\lim_{\sqrt{-1}t\rightarrow
-\infty}\big(\textrm{RHS of (\ref{ABSS2})}\big)
=\sum_Z\chi_y(Z)(-y)^{d_{-}(Z)},$$

where $d_{+}(Z)$ \big(resp. $d_{-}(Z)$\big) is the number of
positive (resp. negative) numbers among the eigenvalues
$\lambda_1,\ldots,\lambda_{n-r}$. So the left-hand side of
(\ref{ABSS2}) and thus each $\chi^p(t,M)$ also have well-defined
limits as $\sqrt{-1}t$ tends to $+\infty$ and $-\infty$.

But by definition, for each $0\leq p\leq n$, $\chi^p(t,M)$ is the
trace of the action $\textrm{exp}(tA)$ on the complex representation
space
$\textrm{ker}(\bar{\partial}+\bar{\partial}^{\ast})-\textrm{coker}(\bar{\partial}+\bar{\partial}^{\ast})$.
Note that $A$ is Killing with respect to the metric $g$. Thus
$\chi^p(t,M)$ is of the following \emph{finite} sum

 $$\chi^p(t,M)=\sum_ia_i(p)\cdot\textrm{exp}\big(\sqrt{-1}t\theta_i(p)\big),
 \qquad a_i(p)\in\mathbb{Z}-\{0\},~\theta_i(p)\in\mathbb{R}.$$

So the only possibility that $\chi^p(t,M)$ has well-defined limits
as $\sqrt{-1}t$ tends to $+\infty$ and $-\infty$ is
$\theta_i(p)\equiv0$ and therefore $\chi^p(t,M)$ is a constant for
any $t$. This completes the proof of (\ref{ABSS}) and Step $1$.
\end{proof}

\emph{$\mathbf{STEP ~2.}$}

In this step we calculate a coefficient in the Taylor expansion of
(\ref{ABSS}) at $y=-1$. The motivation that inspires us to
investigate the coefficient of $y+1$ has been explained in details
in \cite[Section 3]{Li3}, which comes from another interesting
phenomenon of the Hirzebruch $\chi_y$-genus.

\begin{lemma}
Note that the right-hand side of (\ref{ABSS}) is a polynomial of
$y$. If we consider its Taylor expansion at $y=-1$, the coefficient
of the first order term $y+1$ is

\be\label{RHScoeff}\sum_Z\big[(\frac{r}{2}-n)e(Z)+e(Z)\sum_{j=1}^{n-r}
\frac{1}{1-e^{\sqrt{-1}\lambda_jt}}\big]\ee
\end{lemma}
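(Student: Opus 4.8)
The plan is to expand the integrand of the right-hand side of (\ref{ABSS}) directly as a Taylor series in $s:=y+1$ over each component $Z$, and read off the coefficient of $s$ by the Leibniz rule. Fix a component $Z\subset\mathrm{zero}(A)$ of complex dimension $r$ and write the integrand as $\prod_{i=1}^{r}f_i\cdot\prod_{j=1}^{n-r}g_j$ with $f_i:=\alpha_i\frac{1+ye^{-\alpha_i}}{1-e^{-\alpha_i}}$ and $g_j:=\frac{1+ye^{\sqrt{-1}\lambda_j t}e^{-\beta_j}}{1-e^{\sqrt{-1}\lambda_j t}e^{-\beta_j}}$. I would first record three elementary facts: since $\lambda_j\neq 0$, for generic $t$ each $f_i$ is a formal power series in the Chern root $\alpha_i$ and each $g_j$ a formal power series in the Euler class $\beta_j$, both with coefficients affine in $y$; at $y=-1$ one has $f_i|_{y=-1}=\alpha_i$ and $g_j|_{y=-1}=1$, so the integrand at $y=-1$ is the Euler class $\prod_{i=1}^r\alpha_i$ of $Z$ (which recovers the constant term $\chi_{-1}(M)=e(M)$); and $\partial_y f_i$, $\partial_y g_j$ are both independent of $y$, so no delicacy arises in evaluating them at $y=-1$.

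Next I would differentiate once in $y$ and set $y=-1$. By the Leibniz rule this is a sum of $n$ terms, one for each factor, in which that factor is replaced by its $y$-derivative and every other factor is set to $y=-1$. For the term coming from $f_i$, the replacement factor is $\partial_y f_i|_{y=-1}=\alpha_i\frac{e^{-\alpha_i}}{1-e^{-\alpha_i}}=\frac{\alpha_i}{e^{\alpha_i}-1}=1-\frac{\alpha_i}{2}+O(\alpha_i^2)$, which gets multiplied by $\prod_{i'\neq i}\alpha_{i'}$ (degree $2r-2$) and by the $g_j|_{y=-1}=1$'s; integrating over $Z$ extracts the degree-$2r$ part, which is exactly $-\frac12\prod_{i'=1}^r\alpha_{i'}$, contributing $-\frac12 e(Z)$, hence $-\frac r2\,e(Z)$ over the $r$ choices of $i$. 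For the term coming from $g_j$, the replacement factor is $\partial_y g_j|_{y=-1}=\frac{e^{\sqrt{-1}\lambda_j t}e^{-\beta_j}}{1-e^{\sqrt{-1}\lambda_j t}e^{-\beta_j}}$, whose constant term in $\beta_j$ is $\frac{e^{\sqrt{-1}\lambda_j t}}{1-e^{\sqrt{-1}\lambda_j t}}$; this multiplies $\prod_{i=1}^r f_i|_{y=-1}=\prod_i\alpha_i$, which is already of top degree $2r$, so only that constant term survives $\int_Z$, giving $\frac{e^{\sqrt{-1}\lambda_j t}}{1-e^{\sqrt{-1}\lambda_j t}}e(Z)=\big(\frac{1}{1-e^{\sqrt{-1}\lambda_j t}}-1\big)e(Z)$ and, summing over $j$, $\big(\sum_{j=1}^{n-r}\frac{1}{1-e^{\sqrt{-1}\lambda_j t}}-(n-r)\big)e(Z)$.

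Finally I would add the two contributions for each $Z$: using $-\frac r2-(n-r)=\frac r2-n$ this gives $e(Z)\big[(\frac r2-n)+\sum_{j=1}^{n-r}\frac{1}{1-e^{\sqrt{-1}\lambda_j t}}\big]$, and summing over all components $Z$ yields exactly (\ref{RHScoeff}). The computation is otherwise routine; the only point that needs genuine care is the cohomological-degree bookkeeping under $\int_Z$ — one must notice that the Euler class $\prod_i\alpha_i$ of $Z$ is \emph{already} of top degree $2r$, so any factor multiplying it contributes only through its degree-zero part, whereas a factor of the shape $\partial_y f_i|_{y=-1}\prod_{i'\neq i}\alpha_{i'}$ is one degree short of the top and must pick up the $-\alpha_i/2$ term of the Bernoulli expansion $\frac{x}{e^x-1}=1-\frac x2+\cdots$ in order to contribute. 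I expect this degree accounting to be the main (modest) obstacle; once it is in place, the Leibniz rule and the two power-series expansions finish the proof.
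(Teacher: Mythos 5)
Your computation is correct and follows essentially the same route as the paper: expand each factor to first order in $y+1$, multiply, and use the degree bookkeeping over $Z$ (the Euler class $\prod_i\alpha_i$ is already of top degree, while the tangential correction must pick up the $-\alpha_i/2$ term of $\frac{x}{e^x-1}$). The only difference is that you rederive the expansion of the tangential product via the Leibniz rule and the Bernoulli series, whereas the paper simply cites it from \cite[Lemma 2.3]{Li3}.
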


\begin{proof}
From \cite[Lemma 2.3]{Li3} we know that

\be\label{xishu1}\prod_{i=1}^r\frac{\alpha_i(1+ye^{-\alpha_i})}{1-e^{-\alpha_i}}
=c_r(Z)+\big[c_{r-1}(Z)-\frac{r}{2}c_r(Z)\big]\cdot(y+1)+\cdots,\nonumber\ee
and

\be\label{xishu2}
\begin{split}\prod
_{j=1}^{n-r}\frac{1+ye^{\sqrt{-1}\lambda_jt}e^{-\beta_j}}
{1-e^{\sqrt{-1}\lambda_jt}e^{-\beta_j}}&=\prod_{j=1}^{n-r}
\big[1+\frac{e^{\sqrt{-1}\lambda_jt}e^{-\beta_j}}
{1-e^{\sqrt{-1}\lambda_jt}e^{-\beta_j}}(y+1)\big]\\
&=1+\big(\sum_{j=1}^{n-r}\frac{e^{\sqrt{-1}\lambda_jt}e^{-\beta_j}}
{1-e^{\sqrt{-1}\lambda_jt}e^{-\beta_j}}\big)(y+1)+\cdots\\
&=1+\big(\sum_{j=1}^{n-r}
\frac{e^{\sqrt{-1}\lambda_jt}}{1-e^{\sqrt{-1}\lambda_jt}}+\textrm{higher
degree terms}\big)\cdot(y+1)+\cdots. \nonumber\end{split}\ee

Combining these two expressions we can obtain that the coefficient
of $y+1$ on the right-hand side of (\ref{ABSS}) is

\be\begin{split}&\sum_Z\big[-\frac{r}{2}e(Z)+e(Z)\sum_{j=1}^{n-r}
\frac{e^{\sqrt{-1}\lambda_jt}}{1-e^{\sqrt{-1}\lambda_jt}}\big]\\
=&\sum_Z\big[-\frac{r}{2}e(Z)+e(Z)\sum_{j=1}^{n-r}
\big(-1+\frac{1}{1-e^{\sqrt{-1}\lambda_jt}}\big)\big]\\
=& \sum_Z\big[(\frac{r}{2}-n)e(Z)+e(Z)\sum_{j=1}^{n-r}
\frac{1}{1-e^{\sqrt{-1}\lambda_jt}}\big],\end{split}\nonumber\ee

which is precisely (\ref{RHScoeff}). This completes Step $2$.
\end{proof}

\emph{$\mathbf{STEP ~3.}$}

We carefully compare the coefficients of the term $y+1$ on both
sides of (\ref{ABSS}) and obtain the following result.

\begin{lemma}
 \be\label{comparecoeff}\begin{split}
&\sum_{\substack{Z\subset\textrm{zero}(A)\\e(Z)>0}}\big[e(Z)\sum_{j=1}^{n-r}\frac{1}{1-e^{\sqrt{-1}t\lambda_j}}\big]+
\sum_{\substack{Z\subset\textrm{zero}(A)\\e(Z)<0}}
\big[-e(Z)\sum_{j=1}^{n-r}\frac{1}
{1-e^{-\sqrt{-1}t\lambda_j}}\big]\\
\equiv& \frac{1}{2}\sum_{Z\subset
\textrm{zero}(A)}(n-r)|e(Z)|,\qquad \forall~t.\end{split}\ee

 Here
$``|\cdot|"$ means taking the absolute value.
\end{lemma}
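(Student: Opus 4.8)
The plan is to extract the coefficient of $y+1$ from the left-hand side of (\ref{ABSS}), namely from $\chi_y(M)$, which is a topological constant independent of $t$, and equate it with the expression (\ref{RHScoeff}) computed in Step 2. Since $\chi_y(M)$ does not depend on $t$, its coefficient of $y+1$ is some fixed rational number; call it $c$. Thus (\ref{RHScoeff}) equals $c$ for all $t$. Now I would exploit the fact that the right-hand side of (\ref{ABSS}) also has well-defined limits as $\sqrt{-1}t\to\pm\infty$ (established already in Step 1): as $\sqrt{-1}t\to+\infty$, each summand $\frac{1}{1-e^{\sqrt{-1}\lambda_j t}}$ tends to $1$ if $\lambda_j<0$ and to $0$ if $\lambda_j>0$, while as $\sqrt{-1}t\to-\infty$ it tends to $0$ if $\lambda_j<0$ and to $1$ if $\lambda_j>0$. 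Taking either limit in (\ref{RHScoeff}) and using that the whole expression is the constant $c$, I get two evaluations of $c$ in terms of $d_\pm(Z)$; but more usefully, I can average or directly manipulate to eliminate the $(\frac r2-n)e(Z)$ terms.

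The cleanest route is: evaluate (\ref{RHScoeff}) at $t$ and at $-t$ and add. Since $\frac{1}{1-e^{\sqrt{-1}\lambda_j t}}+\frac{1}{1-e^{-\sqrt{-1}\lambda_j t}}=1$ identically (for $\lambda_j t$ not a multiple of $2\pi$), the two copies of the sum over $j$ add up to $n-r$ per component, so (\ref{RHScoeff})$(t)$ + (\ref{RHScoeff})$(-t)$ $=\sum_Z[(r-2n)e(Z)+(n-r)e(Z)] = \sum_Z(-n)e(Z) = -n\,e(M)$, which reconfirms $2c=-n\,e(M)$ but does not yet give (\ref{comparecoeff}). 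Instead I need to reorganize (\ref{RHScoeff}) $= c$ by splitting the sum over components according to the sign of $e(Z)$ and, within each component, flipping the sign of $\lambda_j$ for those $Z$ with $e(Z)<0$. Concretely, for $e(Z)<0$ write $\frac{1}{1-e^{\sqrt{-1}\lambda_j t}} = 1 - \frac{1}{1-e^{-\sqrt{-1}\lambda_j t}}$, so that $e(Z)\sum_j\frac{1}{1-e^{\sqrt{-1}\lambda_j t}} = e(Z)(n-r) - e(Z)\sum_j\frac{1}{1-e^{-\sqrt{-1}\lambda_j t}} = e(Z)(n-r) + |e(Z)|\sum_j\frac{1}{1-e^{-\sqrt{-1}\lambda_j t}}$. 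Substituting this back and collecting, the left-hand side of (\ref{comparecoeff}) emerges naturally, and the remaining purely-topological terms are forced to equal $\frac12\sum_Z(n-r)|e(Z)|$ by comparing with the constant $c=\frac12\cdot(-n\,e(M))$ — or, more directly, by evaluating the identity at a value of $t$ making all the summands vanish in the appropriate limit.

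The main obstacle I anticipate is purely bookkeeping: keeping straight which components contribute $\frac{1}{1-e^{\sqrt{-1}t\lambda_j}}$ versus $\frac{1}{1-e^{-\sqrt{-1}t\lambda_j}}$, handling the components with $e(Z)=0$ (which drop out of both sides, consistent with Definition \ref{definition}), and correctly tracking the constant terms $(\frac r2-n)e(Z)$ through the sign flip so that they assemble into $\frac12\sum_Z(n-r)|e(Z)|$ after cancellation against the constant value of the coefficient of $y+1$ in $\chi_y(M)$. I would double-check the final constant by a degenerate sanity check, e.g. the example of $\mathbb{C}P^n$ with all $\lambda_i$ distinct and isolated fixed points, where $n-r = n$, $|e(Z)|=1$ for each of the $n+1$ fixed points, and the right-hand side of (\ref{comparecoeff}) reads $\frac{n(n+1)}{2}$. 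Once (\ref{comparecoeff}) is in hand, Theorem \ref{mainresult1} should follow in the next step (Step 4) by letting $t$ vary and comparing the meromorphic functions of $e^{\sqrt{-1}t}$ on the two sides — the poles at the various $\lambda_j$ encode the multiplicities, and matching residues of $\lambda$ against $-\lambda$ yields the asserted equality of multiplicities.
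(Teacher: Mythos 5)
Your proposal is correct and follows essentially the same route as the paper: equate the $t$-independent coefficient of $y+1$ in $\chi_y(M)$ with the expression from Step 2, split the sum over components by the sign of $e(Z)$, and use $\frac{1}{1-e^{\sqrt{-1}\lambda_j t}}=1-\frac{1}{1-e^{-\sqrt{-1}\lambda_j t}}$ on the negative-Euler-number components to assemble the constant $\frac12\sum_Z(n-r)|e(Z)|$. The only (minor, and pleasant) difference is that you rederive the constant $-\frac{n}{2}e(M)$ by adding the identity at $t$ and $-t$, whereas the paper simply cites Lemma 2.3 of \cite{Li3} for it.
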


\begin{proof}
The coefficient of the left-hand side of (\ref{ABSS}) is
$-\frac{n}{2}e(M)$ (\cite[Lemma 2.3]{Li3}), which, together with
(\ref{RHScoeff}), yields

\be\label{c1}-\frac{n}{2}e(M)\equiv\sum_Z\big[(\frac{r}{2}-n)e(Z)+e(Z)\sum_{j=1}^{n-r}
\frac{1}{1-e^{\sqrt{-1}\lambda_jt}}\big],\qquad\forall~ t.\ee

Note that we have the famous result $e(M)=\sum_Ze(Z)$, which, for
example, can be obtained by taking $y=-1$ in (\ref{ABSS}) as
$\chi_y(\cdot)\big|_{y=-1}=e(\cdot)$.

Using this identity $e(M)=\sum_Ze(Z)$ to substitute $e(M)$ in
(\ref{c1}), we obtain

\be\label{xishu4}\begin{split}&\frac{1}{2}\sum_Z(n-r)e(Z)\\
\equiv&\sum_Z\big[e(Z)\sum_{j=1}^{n-r}\frac{1}{1-e^{\sqrt{-1}\lambda_jt}}\big]\\
=&\sum_{\substack{Z\subset\textrm{zero}(A)\\e(Z)>0}}\big[e(Z)\sum_{j=1}^{n-r}\frac{1}{1-e^{\sqrt{-1}\lambda_jt}}\big]+
\sum_{\substack{Z\subset\textrm{zero}(A)\\e(Z)<0}}\big[-e(Z)\sum_{j=1}^{n-r}\frac{-1}{1-e^{\sqrt{-1}\lambda_jt}}\big]\\
=&\sum_{\substack{Z\subset\textrm{zero}(A)\\e(Z)>0}}\big[e(Z)\sum_{j=1}^{n-r}\frac{1}{1-e^{\sqrt{-1}\lambda_jt}}\big]+
\sum_{\substack{Z\subset\textrm{zero}(A)\\e(Z)<0}}\big[-e(Z)\sum_{j=1}^{n-r}(\frac{1}{1-e^{-\sqrt{-1}\lambda_jt}}-1)\big],~
\forall~t.
\end{split}\ee

Rewriting (\ref{xishu4}) slightly, we can easily yield
(\ref{comparecoeff}).
\end{proof}

\emph{$\mathbf{STEP ~4.}$}

 We can now complete the proof of Theorem
\ref{mainresult1}.
\begin{lemma}
Theorem \ref{mainresult1} holds.
\end{lemma}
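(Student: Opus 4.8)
The plan is to boil the functional identity (\ref{comparecoeff}) down to a single trigonometric relation that remembers only the multiset $\textrm{S}(A)$, and then extract the symmetry of its multiplicities from a pole analysis.

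\emph{Rewriting (\ref{comparecoeff}).} By the very definition of $\textrm{S}(A)$, the left-hand side of (\ref{comparecoeff}) is exactly $\sum_{\lambda\in\textrm{S}(A)}\frac{1}{1-e^{\sqrt{-1}\lambda t}}$, the sum taken with multiplicity: a component $Z$ with $e(Z)>0$ contributes its eigenvalues $\lambda_j$ each with multiplicity $e(Z)$, while for $e(Z)<0$ one rewrites $\frac{1}{1-e^{-\sqrt{-1}t\lambda_j}}=\frac{1}{1-e^{\sqrt{-1}t(-\lambda_j)}}$ and the $-\lambda_j$'s with multiplicity $-e(Z)$ are precisely what $Z$ puts into $\textrm{S}(A)$. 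Similarly the right-hand side equals $\frac{1}{2}\#\textrm{S}(A)$, since each $Z$ with $e(Z)\neq 0$ contributes $(n-r)|e(Z)|$ elements. Plugging in the elementary identity $\frac{1}{1-e^{\sqrt{-1}\theta}}=\frac{1}{2}+\frac{\sqrt{-1}}{2}\cot\frac{\theta}{2}$ and comparing imaginary parts, (\ref{comparecoeff}) becomes
$$\sum_{\lambda\in\textrm{S}(A)}\cot\frac{\lambda t}{2}\equiv 0\qquad\text{for all }t.$$

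\emph{The pole argument.} For a nonzero real $\mu$ write $m(\mu)\geq 0$ for the multiplicity of $\mu$ in $\textrm{S}(A)$; only finitely many $\mu$ have $m(\mu)>0$. Since $\cot$ is odd, the displayed identity is equivalent to $\sum_{\mu>0}\big(m(\mu)-m(-\mu)\big)\cot\frac{\mu t}{2}\equiv 0$. Assume for contradiction that $m(\mu)\neq m(-\mu)$ for some $\mu>0$, and let $\mu_0$ be the largest such $\mu$. Viewing both sides as meromorphic functions of $t\in\mathbb{C}$, the term $\cot(\mu t/2)$ has simple poles precisely at $t\in\frac{2\pi}{\mu}\mathbb{Z}$, with residue $\frac{2}{\mu}$ at $t=\frac{2\pi}{\mu}$. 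At $t_0:=\frac{2\pi}{\mu_0}$, each surviving term has $0<\mu<\mu_0$, so $\frac{\mu t_0}{2}=\frac{\pi\mu}{\mu_0}\notin\pi\mathbb{Z}$ and that term is holomorphic at $t_0$; hence the residue of the left-hand side at $t_0$ equals $\big(m(\mu_0)-m(-\mu_0)\big)\cdot\frac{2}{\mu_0}\neq 0$, contradicting that it vanishes identically. Therefore $m(\mu)=m(-\mu)$ for every nonzero real $\mu$, which is exactly the assertion of Theorem \ref{mainresult1}.

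\emph{Where the work is.} The only step that requires genuine care is the first one: the bookkeeping that turns the mixed-looking identity (\ref{comparecoeff}) --- with its separate treatment of components of positive and negative Euler number --- into the clean symmetric statement about $\textrm{S}(A)$; this is where the sign flips attached to the components with $e(Z)<0$ enter, and it mirrors the computation already carried out in the proof of Proposition \ref{prop}. Once that identity is in hand the remainder is routine; in particular one may bypass the meromorphic continuation and simply let $t\to\frac{2\pi}{\mu_0}$ along the real line, observing that the $\mu_0$-term blows up while every other term stays bounded near $\frac{2\pi}{\mu_0}$.
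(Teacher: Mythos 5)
Your argument is correct, and it supplies in full what the paper at this point merely outsources: the published Step 4 substitutes $h=\exp(\sqrt{-1}t)$ into (\ref{comparecoeff}) and then cites \cite[Lemma 2.4, Remark 2.5]{Li3} for the combinatorial extraction of the multiplicity symmetry, whereas you prove that extraction directly. Your bookkeeping in the first step is right: by Definition \ref{definition} the two sums over components with $e(Z)>0$ and $e(Z)<0$ assemble exactly into $\sum_{\lambda\in\mathrm{S}(A)}\frac{1}{1-e^{\sqrt{-1}\lambda t}}$, the right-hand side is $\frac{1}{2}\#\mathrm{S}(A)$, and the identity $\frac{1}{1-e^{\sqrt{-1}\theta}}=\frac{1}{2}+\frac{\sqrt{-1}}{2}\cot\frac{\theta}{2}$ then kills the real part and leaves $\sum_{\lambda\in\mathrm{S}(A)}\cot\frac{\lambda t}{2}\equiv 0$. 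The pole (or real-limit) argument at $t_0=2\pi/\mu_0$, with $\mu_0$ the largest positive value at which $m(\mu_0)\neq m(-\mu_0)$, is clean and complete: all other surviving terms are regular at $t_0$ because $\pi\mu/\mu_0\notin\pi\mathbb{Z}$ for $0<\mu<\mu_0$. The one point worth stating explicitly is that (\ref{comparecoeff}) is established only for $t$ outside the closed null set where some denominator $1-e^{\sqrt{-1}\lambda_j t}$ vanishes; since that set omits a punctured neighbourhood of $t_0$ on the real line, your limit argument goes through unchanged. What your route buys is independence from \cite{Li3}; what the paper's route buys is brevity and consistency with the circle-action case treated there (where the $\lambda_j$ are integers and $h^{\lambda_j}$ is a genuine rational function of $h$ --- note that for real, possibly irrational $\lambda_j$ the substitution $h^{\lambda_j}$ is only formal, so your cotangent formulation is arguably the more robust one in the present generality).
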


\begin{proof}
Set $h:=\textrm{exp}(\sqrt{-1}t)$. Then (\ref{comparecoeff}) becomes

\be\label{comparecoeff1}\begin{split}&\sum_{\substack{Z\subset\textrm{zero}(A)\\e(Z)>0}}\big[e(Z)\sum_{j=1}^{n-r}\frac{1}{1-h^{\lambda_j}}\big]+
\sum_{\substack{Z\subset\textrm{zero}(A)\\e(Z)<0}}\big[-e(Z)\sum_{j=1}^{n-r}\frac{1}{1-h^{-\lambda_j}}\big]\\
\equiv& \frac{1}{2}\sum_{Z\subset\textrm{zero}(A)}(n-r)|e(Z)|,\qquad
\forall~h.\end{split}\ee

Now Theorem \ref{mainresult1} follows from (\ref{comparecoeff1}),
\cite[Lemma 2.4]{Li3} and \cite[Remark 2.5]{Li3}.
\end{proof}

\section*{Acknowledgments}

This work was initiated when the author was holding a JSPS
Postdoctoral Fellowship in the Departments of Mathematics at Tokyo
Metropolitan University (2011-2012) and Waseda University
(2012-2013) respectively. The author would like to thank the
Departments and his host Professor Martin A. Guest for their
hospitality during the author's stay in Japan.

\bibliographystyle{amsplain}

\end{document}